\newcommand{\diag}{\mathop{\mathrm{diag}}}
\newcommand{\bs}{\ensuremath{\boldsymbol}}
\DeclareMathOperator*{\argmax}{argmax}
\newtheorem{definition}{Definition}[]
\newtheorem{lemma}{Lemma}[]
\newtheorem{remark}{Remark}[]
\newtheorem{corollary}{Corollary}[]
\newtheorem{proposition}{Proposition}[]
\newtheorem{assumption}{Assumption}[]
\newcommand{\R}{\ensuremath{\mathbb{R}}}
\begin{document}
	
	\title{Microscopic Derivation of Mean Field Game Models}
	
		\author{  Martin Frank\footnote{Karlsruhe Institute of Technology, Steinbuch Center for Computing, Hermann-von-Helmholtz-Platz 1, 76344 Eggenstein-Leopoldshafen, Germany}, Michael Herty\footnote{Institut f\"ur Geometrie und Praktische Mathematik, RWTH Aachen, Templergraben 55, 52056 Aachen, Germany}, Torsten Trimborn\footnotemark[2]~\footnote{Corresponding author: trimborn@igpm.rwth-aachen.de} }

	\maketitle


\abstract{Mean field game theory studies the behavior of a large number of interacting individuals in a game theoretic setting and has received a lot of attention in the past decade  \cite{lasry2007mean}.
In this work, we derive mean field game partial differential equation systems from deterministic microscopic agent dynamics. The dynamics are given by a particular class of ordinary differential equations, for which an optimal strategy can be computed \cite{bressan2011noncooperative}. We use the concept of Nash equilibria and apply the dynamic programming principle to derive the mean field limit equations and we study the scaling behavior of the system as the number of agents tends to infinity and find several mean field game limits. Especially we avoid in our derivation the notion of measure derivatives. Novel scales are motivated by an example of an agent-based financial market model. }	
	
   \section{Introduction}
 A great variety of phenomena in social and natural sciences is described and analyzed by agent-based models  \cite{gilbert2008agent, bellomo2017active, bellomo2019active}. 
 In many situations, the number of interacting agents is large and the agents compete against each other, e.g. by maximizing their individual payoff  \cite{axelrod1997complexity}.
 We assume that the agents' dynamics can be modeled by differential games \cite{nash1951non, aumann1964markets, gueant2011mean}. Models of this type can be found in many areas of research such as biology, engineering and economics. 
 Mean field game theory considers an infinite number of players that are in a Nash equilibria \cite{nash1951non, gueant2011mean}.
 In the last decade, there has been a vast number of contributions, since the independent introduction of mean field Nash equilibria by Lasry and Lions  and  Huang, Caines and Melham\'e  \cite{lasry2007mean, gueant2011mean, jovanovic1988anonymous, huang2006large, cardaliaguet2012long, cardaliaguet2013long, cardaliaguet2015master, lachapelle2016efficiency, lasry2008application, huang2006nash, huang2007invariance, bensoussan2013mean, achdou2010mean, carmona2013probabilistic, carmona2018probabilistic}. \\[2em]
 Mathematically speaking, a mean field game model is characterized by as system of  two coupled partial differential equations (PDEs) called Hamilton-Jacobi-Bellmann (HJB) equations.
The first PDE \eqref{first} is posed backwards in time and  associated to the optimal control problem.  The second PDE \eqref{second} is forward transport equation in time. \\
The prototype deterministic mean field game (MFG) model of the literature \cite{cardaliaguet2010notes, degond2014meanfield} is given by:
\begin{subequations}\label{prototype}
\begin{align}
&\partial_t h(t,x) - H(x, g(t,x), \nabla_x h(t,x) ) = 0,\label{first}\\
& \partial_t g(t,x)- div(\nabla_3 H(x,g(t,x), \nabla_x h(t,x) )\ g(t,x))=0, \label{second}\\
& g(0,x)=g_0,\quad h(x,T)=\mathcal{p}(x,g(T,x)),
\end{align}
\end{subequations}
with $t\in (0,T),\ x\in\R^d$ and Hamiltonian $H=H(x,g(t,x),  \nabla_x h(t,x))$. Here, $\nabla_3 H$ denotes the derivative with respect to the third component of the Hamiltonian. 
The function $h$ describes the value function of a player at time $t$ and attribute $x$, whereas
$g$ is a probability density function characterizing the probability of finding an agent at time $t$ with attribute $x$. 
Many results are analytical results such as existence and uniqueness \cite{bensoussan2014control, lasry2007mean, cardaliaguet2010notes, carmona2013probabilistic}.
The derivation of MFG systems from microscopic dynamics has been shown only by Lions \cite{cardaliaguet2010notes} to our knowledge and more recently by Degond et al. \citep{degond2014meanfield}. Recently, the solvability and mean field limit of linear-quadratic differential games has been investigated \cite{huang2018linear, ma2019linear, huang2019linear}.\\[2em]
In this work, we consider a particular class of microscopic differential games \cite{bressan2011noncooperative} and study different mean field limits,  obtained by different scaling assumptions.
Starting with disrete microsocpic dynamics, we solve the closed loop problem (feedback Nash equilibria) by the dynamic programming principle.
Then, we consider the mean field limit and the continuum limit. This approach enables us to avoid the measure derivatives as presented in \cite{cardaliaguet2010notes, degond2014meanfield}.\\
Another strategy in order to connect the microscopic dynamics with the mean field game model is presented in \cite{cardaliaguet2015master}. The authors first prove the well-posedness of the mean field game equations. As second step they evaluate the PDE along trajectories of the empirical measure in order to connect the limit equation with particle dynamics.\\ \\
We especially want to point out that several MFG models discussed in literature belong to our model \cite{cardaliaguet2010notes, gueant2011mean, degond2014meanfield, huang2018linear, ma2019linear, huang2019linear}. A particular example is the econophysical Levy-Levy-Solomon (LLS) model \cite{levy2000microscopic}, which does not lead to a MFG limit system of type \eqref{prototype}. 
This model is discussed in section 4. 

%
	
%
\section{Background and Main Results}
\paragraph{Microscopic Differential Game Model}
	We consider $N$ players, each player $i=1,...,N$ is  faced with the constrained optimization problem
	\begin{align}
		\argmax\limits_{u_i:\ [0,T]\to \mathbb{R}} {p}_i(\boldsymbol{x}(T))-\int\limits_0^T {L_i}(t,\boldsymbol{x},\boldsymbol{u})\ dt, \label{MicroGame}
	\end{align}
	with time horizon $T>0$ where $\boldsymbol{x}=(x_1,...,x_N)^{\top}\in \R^{d N},\ x_i=(x_i^1,...,x_i^d)^{\top}\in\R^d,\ 1\leq i\leq N,\ d\geq 1$ is the state solution to \eqref{stateDyn} and $ \boldsymbol{u}=(u_1,...,u_N)^{\top}\in \R^N$ is the control, $\boldsymbol{x}$ and $\boldsymbol{u}$ are time dependent. 
	\begin{align}\label{stateDyn}
		\dot{\boldsymbol{x}}=\boldsymbol{f}(t,\boldsymbol{x},\boldsymbol{u}),\quad \boldsymbol{x}(0)=\boldsymbol{x}_0\in\R^{dN}.
	\end{align}
The functions $\boldsymbol{f}=({f}_1,...,{f}_N)^{\top},\ \bs{L}=({L}_1,...,{L}_N)^{\top},\ \bs{p}=({p}_1,...,{p}_N)^{\top} $ are continuously differentiable and defined on:
\begin{align*}
{f}_i: [0,T]\times \R^{dN}\times \R^N\to \R^d,\quad {L}_i: [0,T]\times \R^{dN}\times \R^N\to \R,\quad {p}_i:\ \R^{dN}\to\R.
\end{align*}	
\paragraph{Nash Equilibria}
We assume that the players can observe the current state $\boldsymbol{x}$ of the system, thus their strategies may depend on time, the current state and  the initial state $u_i=u_i^*(t, \boldsymbol{x}(t);\boldsymbol{x}_0)$. Hence, the agents play a feedback or \emph{Markovian strategy}. This equilibrium concept is known as \emph{feedback Nash equilibrium}. For our further discussion we neglect the dependence of $\boldsymbol{u}$ on the trajectory $\boldsymbol{x}$ and the initial state $\boldsymbol{x}_0$. 
\begin{definition}
 A vector of control functions $u^*:  (t)\mapsto (u_1^*(t),...,u_N^*(t))^{\top}$ is a Nash equilibrium for the game \eqref{MicroGame} if the following holds.
 The control $u_i^*(t)$ provides a solution to the optimal control problem for player $i$, where we assume that the control of the other agents $\boldsymbol{u}^*_{-i}(t):= (u_1^*(t),...,u_{i-1}^*(t), u_{i+1}^*(t),... ,u_N^*(t))^{\top}\in\R^{N-1}$ are fixed and optimal:
\begin{subequations}\label{NashSys}
\begin{align}
&\argmax\limits_{u_i: [0,T]\to\R} p_i(\boldsymbol{x}(T))-\int\limits_0^{\top} L_i(t,\boldsymbol{x},u_i, \boldsymbol{u}^*_{-i})\ dt,\\
& \text{s.t.}\quad  \dot{\boldsymbol{x}}=\boldsymbol{f}(t,\boldsymbol{x},\boldsymbol{u}),\quad \boldsymbol{x}(0)=\boldsymbol{x}_0.
\end{align}
\end{subequations}
\end{definition}

\begin{assumption} \label{assI}
We assume that the dynamics and the running costs decouple in the following way:
\begin{subequations}\label{MicroMod}
\begin{align}
\bs{f}(t,\boldsymbol{x},\boldsymbol{u})&=\bs{f^0}(t,\boldsymbol{x})+\bs{M}^1(t,\boldsymbol{x})\ u_1(t,\bs{x})+...+\bs{M}^N(t,\boldsymbol{x})\ u_N(t,\bs{x}),\\
{ L}_i(t,\boldsymbol{x}, \boldsymbol{u})&= L_i^1(t,\boldsymbol{x},u_1)+...+L_i^N(t,\boldsymbol{x},u_N),
\end{align}
\end{subequations}
with $\boldsymbol{u}(t,\bs{x})=(u_1(t,\bs{x}),...,u_N(t,\bs{x}))^{\top}\in \R^N, \bs{f^0}(t,\boldsymbol{x})\in\R^{dN},\ \bs{M}^i(t,\bs{x})\in\R^{dN},\ i=1,...,N$. 
\end{assumption}
For this class of differential games, one can prove the existence and uniqueness of Nash equilibrium solutions \citep{bressan2011noncooperative}. 
The following proposition gives detailed conditions on the existence and uniqueness of Nash equilibrium solutions, see Bressan \citep{bressan2011noncooperative}. 

\begin{proposition}\label{theo1}
[Lemma 2 in \citep{bressan2011noncooperative}] We consider the microscopic model \eqref{MicroMod} and assume that $\bs{M}^i$ depends continuously on $(t,\boldsymbol{x})$. For every $(t,\boldsymbol{x})\in[0,T]\times \R^{dN}$ and any vector $\lambda^i\in\R^{dN},\ i=1,...,N$, there exist a unique vector $\boldsymbol{u}^*\in\R^N$, respectively a map:
$(t,\boldsymbol{x}, \boldsymbol{\lambda}) \mapsto \boldsymbol{u}^*(t,\boldsymbol{x}, \boldsymbol{\lambda}), $ such that the control value $u_i^*$ fulfills
\[
u_i^*=\argmax\limits_{\omega\in\R}\quad  (\lambda^i)^{\top} \bs{M}^i(t,\boldsymbol{x})\ \omega-L_i^i(t,\boldsymbol{x}, \omega).
\]
\end{proposition}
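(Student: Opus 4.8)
The plan is to use the decoupled structure granted by Assumption~\ref{assI} to split the $N$-player optimality condition into $N$ independent scalar concave maximizations, and then to extract existence, uniqueness and continuity of $\boldsymbol{u}^*$ from the convex-analytic properties of the running costs $L_i^i$ that underlie Bressan's framework. First I would note that, since $\boldsymbol{f}$ is affine in the control and $L_i$ is an additive sum of terms each depending on a single control, the scalar map $\omega \mapsto (\lambda^i)^{\top}\bs{M}^i(t,\boldsymbol{x})\,\omega - L_i^i(t,\boldsymbol{x},\omega)$ appearing in the statement is exactly the $u_i$-dependent part of the (pre-)Hamiltonian of player $i$, namely $(\lambda^i)^{\top}\bs{f}(t,\boldsymbol{x},\boldsymbol{u}) - L_i(t,\boldsymbol{x},\boldsymbol{u})$: all cross terms $(\lambda^i)^{\top}\bs{M}^j u_j$ and $L_i^j(t,\boldsymbol{x},u_j)$ with $j\neq i$, as well as $(\lambda^i)^{\top}\bs{f^0}(t,\boldsymbol{x})$, are constants with respect to $\omega$ and drop out. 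Hence the componentwise conditions are genuinely $N$ separate scalar problems, one per player.

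Next, fix $(t,\boldsymbol{x})$ and $\bs{\lambda}=(\lambda^1,\dots,\lambda^N)$ and, for each $i$, set $g_i(\omega) := (\lambda^i)^{\top}\bs{M}^i(t,\boldsymbol{x})\,\omega - L_i^i(t,\boldsymbol{x},\omega)$. Under the hypotheses carried along from \citep{bressan2011noncooperative} (strict convexity of $\omega\mapsto L_i^i(t,\boldsymbol{x},\omega)$ together with superlinear growth as $|\omega|\to\infty$), $g_i$ is strictly concave with $g_i(\omega)\to-\infty$ as $|\omega|\to\infty$, so it attains its supremum at a unique point $u_i^*$; collecting these over $i=1,\dots,N$ yields the unique vector $\boldsymbol{u}^*\in\R^N$. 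For the map $(t,\boldsymbol{x},\bs{\lambda})\mapsto \boldsymbol{u}^*(t,\boldsymbol{x},\bs{\lambda})$ I would use the first-order condition characterizing the maximizer, $\partial_\omega L_i^i(t,\boldsymbol{x},u_i^*) = (\lambda^i)^{\top}\bs{M}^i(t,\boldsymbol{x})$: strict convexity makes $\partial_\omega L_i^i(t,\boldsymbol{x},\cdot)$ strictly increasing and superlinear growth makes it onto $\R$, so it is a homeomorphism of $\R$, and therefore
\[
u_i^*(t,\boldsymbol{x},\bs{\lambda}) = \bigl(\partial_\omega L_i^i(t,\boldsymbol{x},\cdot)\bigr)^{-1}\!\bigl((\lambda^i)^{\top}\bs{M}^i(t,\boldsymbol{x})\bigr),
\]
which is continuous in $(t,\boldsymbol{x},\bs{\lambda})$ because $\bs{M}^i$ is continuous by hypothesis, $\partial_\omega L_i^i$ is continuous, and the inverse of a continuous strictly monotone function is continuous. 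Alternatively, one may argue continuity of the single-valued argmax via Berge's maximum theorem.

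The computations are routine; the step that deserves care is the reduction itself, i.e.\ checking explicitly that the control-affine dynamics and the additively separable running costs of Assumption~\ref{assI} decouple the $dN$-dimensional optimality system into $N$ scalar problems, and making precise which convexity/growth conditions on $L_i^i$ (implicit in the cited setting) are needed so that \argmax\ denotes a single point and not a set. If one only wanted continuity (rather than differentiability) of $L_i^i$, the inverse-function formula would be replaced by a direct strict-concavity argument for uniqueness together with the maximum-theorem (closed-graph) argument for continuity of the map.
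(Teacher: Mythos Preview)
Your argument is a sound reconstruction of why the scalar $\argmax$ is well-defined and depends continuously on the data: the decoupling via Assumption~\ref{assI} reduces the problem to $N$ independent one-dimensional concave maximizations, and strict convexity plus coercivity of $\omega\mapsto L_i^i(t,\boldsymbol{x},\omega)$ yields a unique maximizer with the continuity following either from the first-order condition and monotone inversion or from Berge's theorem. You are also right to flag that the convexity and superlinear-growth hypotheses on $L_i^i$ are not stated in the proposition itself but are inherited from Bressan's setting.

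However, the paper does not actually prove this proposition: it is quoted verbatim as Lemma~2 of \citep{bressan2011noncooperative} and no proof is given. So there is nothing to compare your approach against in this paper --- your sketch is essentially the standard argument one would expect in the cited reference, and it goes beyond what the present paper supplies.
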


Here, $\lambda^i$ denotes the Lagrange multiplier of the optimality system \eqref{NashSys}. 
In order to compute the control explicitly, we restrict ourselves to the following class of 
running costs $\bs{L}$.
\begin{assumption} \label{cost}
We consider running costs of the form
\begin{subequations} \label{runCost}
\begin{align}
&{L}_i^i(t,\boldsymbol{x}, \boldsymbol{u})=\frac1Nl_{i}(t,\boldsymbol{x})+ \frac{\widehat{\alpha}}{2}\ u_i^2,\\
& {L}_i^k(t,\boldsymbol{x}, \boldsymbol{u})=\frac1N l_{i}(t,\boldsymbol{x})+ \frac{\bar{\alpha}}{2}  u_k^2,\ k\neq i. 
\end{align}
\end{subequations}

with $\widehat{\alpha}>0,\ \bar{\alpha}\geq 0$ and $l_i: [0,T]\times \R^{dN}\to \R,$ continuous differentiable in $(t,\boldsymbol{x})$. 
\end{assumption}

\begin{lemma}
Under Assumption \ref{cost} the optimal control value $u_i^*$ is given by
\[
u_i^*=\frac{1}{\widehat{\alpha}} (\lambda^i)^{\top}\ \bs{M}^i(t,\boldsymbol{x}).
\]
\end{lemma}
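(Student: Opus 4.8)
\emph{Proof sketch.} The plan is to invoke Proposition \ref{theo1} directly and then carry out the (elementary) pointwise maximization for the specific cost structure of Assumption \ref{cost}. Since $\bs{M}^i$ is assumed to depend continuously on $(t,\boldsymbol{x})$, Proposition \ref{theo1} applies and tells us that $u_i^*$ is the unique maximizer over $\omega\in\R$ of
\[
\omega \;\longmapsto\; (\lambda^i)^{\top} \bs{M}^i(t,\boldsymbol{x})\,\omega - L_i^i(t,\boldsymbol{x}, \omega).
\]

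Next I would substitute the first line of \eqref{runCost}, namely $L_i^i(t,\boldsymbol{x},\omega) = \frac{1}{N} l_i(t,\boldsymbol{x}) + \frac{\widehat{\alpha}}{2}\omega^2$, so that the objective becomes $(\lambda^i)^{\top} \bs{M}^i(t,\boldsymbol{x})\,\omega - \frac{1}{N} l_i(t,\boldsymbol{x}) - \frac{\widehat{\alpha}}{2}\omega^2$. The term $\frac{1}{N} l_i(t,\boldsymbol{x})$ is independent of $\omega$ and drops out of the $\argmax$. What remains is a strictly concave quadratic in $\omega$ (strictly concave precisely because $\widehat{\alpha}>0$), hence it admits a unique global maximizer determined by the stationarity condition. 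Differentiating in $\omega$ and equating to zero yields $(\lambda^i)^{\top} \bs{M}^i(t,\boldsymbol{x}) - \widehat{\alpha}\,\omega = 0$, i.e.\ $\omega = \frac{1}{\widehat{\alpha}}(\lambda^i)^{\top} \bs{M}^i(t,\boldsymbol{x})$, which is the asserted formula; the second-order condition $-\widehat{\alpha}<0$ confirms it is a maximum and re-confirms the uniqueness already granted by Proposition \ref{theo1}.

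There is essentially no hard step. The only points worth checking are that $L_i^i$ — the unique term among the $L_i^k$, $k=1,\dots,N$, in which the variable $u_i$ appears — is exactly the quantity entering the scalar optimization of Proposition \ref{theo1}, and that the strict positivity $\widehat{\alpha}>0$ provides the coercivity/strict concavity needed for the unconstrained maximizer over all of $\R$ to exist, be unique, and be given by the first-order condition. One may optionally remark that it is precisely the quadratic penalty that renders the feedback law affine (indeed linear) in the adjoint variable $\lambda^i$, the structural feature that will later make the passage to the mean field limit tractable. $\hfill\qed$
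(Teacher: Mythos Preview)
Your proposal is correct and matches the paper's treatment: the paper states this lemma without proof, as it is an immediate consequence of Proposition~\ref{theo1} together with the first-order condition for the strictly concave quadratic $\omega\mapsto (\lambda^i)^{\top}\bs{M}^i(t,\boldsymbol{x})\,\omega-\frac{\widehat{\alpha}}{2}\omega^2$. Your write-up simply spells out this elementary computation.
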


In the further discussion we summarize the main steps and key assumptions needed to derive the MFG limit system.

\paragraph{Symmetry Assumptions}
A crucial assumption is to consider identical, indistinguishable players. This translates into symmetry assumptions on our running costs $\bs{L}$ and dynamics $\bs{f}$.  
\begin{definition}\label{EmpDef}
We define an empirical moment $\rho^N_{\Phi}(\boldsymbol{x})\in\R,\ \bs{x}\in\R^{dN}$ of the polynomial of degree $n$
$$
\Phi(x) = \sum\limits_{|j|\leq n}  \beta_{j } \ (x^1)^{j_1}  (x^2)^{j_2}...  (x^d)^{j_d},\ \beta_j\in\R\,\ x=(x^1,...,x^d)^{\top}\in\R^d,
$$
 with multi-index notation $|j|=j_1+j_2+...+j_d$,   
 by
$$
\rho^N_{\Phi}(\bs{x}):= \frac1N \sum\limits_{k=1}^N \Phi(x_k).
$$
\end{definition}
Notice that the empirical moment $\rho^N_{\Phi}$ is symmetric with respect to the $N$ variables $x_k$. Here, symmetry $\rho^N_{\Phi}(\boldsymbol{x})=\rho^N_{\Phi}(\boldsymbol{x}_{\sigma}) $ is defined by any permutation $\sigma: \{1,...,N\}\to \{1,...,N \},\ \boldsymbol{x}_{\sigma}:=(x_{\sigma_1},...,x_{\sigma_N})$.
\begin{assumption}\label{ASym}
 We model the dynamic $f$ and running cost $L$ for some $t\in[0,T],\ \bs{x}\in\R^{dN},\ \rho^N_{\Phi}(\bs{x})\in\R$ as follows.
\begin{itemize}
\item[i)] $\bs{f_i^0}(t,\boldsymbol{x}):=f(t,x_i,\rho^N_{\Phi}(\bs{x}))$ with $f: \ [0,T]\times \R^d \times \R\to \R^d$, $i=1,...,N$.
\item[ii)] $\boldsymbol{M}^i_i(t,\boldsymbol{x}):=\widehat{m}(t,x_i,\rho^N_{\Phi}(\bs{x})),\ \boldsymbol{M}_k^i(t,\boldsymbol{x}):=\bar{m}(t,x_k,\rho^N_{\Phi}(\bs{x})),\ k\neq i$,  $k=1,...,N$ with $\widehat{m}:  [0,T]\times \R^d\times \R\to \R^d,\ \bar{m}: [0,T]\times \R^d\times \R\to \R^d$.
\item[iii)]  $l_i(t,\boldsymbol{x}):= l(t,x_i, \rho_{\Phi}^N(\bs{x}))$ with $l: [0,T]\times \R^d\times  \R\to \R$.
 \item[iv)] ${p}_i(\boldsymbol{x}):= p(x_i,  \rho_{\Phi}^N(\bs{x}))$  with $p: \R^d \times \R\to \R$. 
\end{itemize}
Furthermore, we assume that the functions $f,\widehat{m}, \bar{m},l,p$ are 
\begin{itemize}
\item continuously differentiable in all arguments,
\item Lipschitz continuous with respect to $(\boldsymbol{x}, \rho_{\Phi}^N)$  and with Lipschitz constant independent of $t$.
\end{itemize}
\end{assumption}
Notice that all quantities denoted with a hat denote the self-interaction of agents, whereas the quantities with a bar denote the interaction with other agents. Furthermore, we want to emphasize that the functions $f,\ \widehat{m},\ \bar{m},\ l,\ p $ are symmetric in $\boldsymbol{x}_{-i}:=(x_1,...,x_{i-1},x_{i+1},...,x_N)^{\top}\in\R^{d (N-1)}$.

\paragraph{Scaling Assumptions}
The mean field limit will be derived for different scalings of $ \widehat{m}, \bar{m},  \widehat{\alpha}, \bar{\alpha} $. 
In order to obtain in the limit $N\to \infty$ a mean field game equation the quantities $\bar{m}$ and $\bar{\alpha}$ need to be scaled in the following way:
\begin{assumption}\label{ScaleA}
For $\bar{a},\bar{\theta}\geq 1$ we define
\begin{align*}
\boldsymbol{\bar{m}}:=\frac{1}{N^{\bar{\theta}}} \bar{m},\quad \boldsymbol{\bar{\alpha}}:=\frac{1}{N^{\bar{a}}} \bar{\alpha}. 
\end{align*}
For $\widehat{\theta}, \widehat{a} \geq 0$ we define
\begin{align*}
&\boldsymbol{\widehat{m}} := \frac{1}{N^{\widehat{\theta}}} \widehat{m } ,\quad \boldsymbol{\widehat{\alpha}}:=\frac{1}{N^{\widehat{a}}} \widehat{\alpha}.
\end{align*}
\end{assumption}
The previous assumption will be explained in detail in section \ref{Main}. 
The main result is as follows, see Lemma \ref{ScaleTheo}. If the inequalities
\begin{align*}
&i)\ \quad \text{\ibygr{h}}_1 := \widehat{a}-2\ \widehat{\theta} \leq  0,\\
&ii)\ \quad \text{\ibygr{h}}_2:= \widehat{a}+1-\widehat{\theta}-\bar{\theta} \leq 0,\\
&iii)\ \quad \text{\ibygr{h}}_3:=2\ \widehat{a}+1-2\ \widehat{\theta}-\bar{a} \leq 0,\\
\end{align*}
 hold under Assumptions \ref{assI}-\ref{ScaleA} a formal mean field limit of the differential game \eqref{MicroGame}-\eqref{stateDyn} exists.
Eight different limiting equations are obtained depending if the inequalities $i)-iii)$ are sharp or not. In the case that the inequalities $i)-iii)$ are sharp we call this \textbf{vanishing-coupling} since all game-theoretic components vanish in the mean field limit. The case of equality in $i)$ and inequality in $ii)$ and $iii)$ corresponds to the case in literature e.g.  \cite{degond2014meanfield, cardaliaguet2010notes}. For that reason we call this setting the \textbf{classical-coupling}. In the case that we have equality in $ii)$ or $iii)$ we obtain non-local integral terms in our limit equations. Therefore, we refer to this setting as \textbf{non-local-coupling}. 

\paragraph{Main Result}
The formal MFG limit under different scalings is given by \eqref{MFGFirst}:
\begin{subequations}\label{MFGFirst}
\begin{align} 
&\partial_t h(t,x) +  {f}(t,x,\rho_{\Phi}[g])  \ \nabla_x h(t,x)  \\
&\quad\quad +\left( \chi_{\{\text{\ibygr{h}}_2=0\}}\left[\frac{1}{\widehat{\alpha}}\ \bar{{m}}(t,x,\rho_{\Phi}[g]) \int g(t,z)\ \widehat{{m}}(t,z,\rho_{\Phi}[g])\ \nabla_z h(t,z)\ dz \right] \right) \ \nabla_x h(t,x)\\
&\quad\quad={l}(t,x,\rho_{\Phi}[g])-\chi_{\{\text{\ibygr{h}}_1=0\}}\left[\frac{1}{2 \widehat{\alpha}} (\widehat{{m}}(t,x,\rho_{\Phi}[g])\  \nabla_x h(t,x))^2\right]\\
&\quad\quad\quad\quad\quad + \chi_{\{\text{\ibygr{h}}_3=0\}}\left[ \frac{\bar{\alpha}}{2\ \widehat{\alpha}^2} \int g(t,z)\ (\nabla_{z} h(t,z)\ \widehat{{m}}(t,z, \rho_{\Phi}[g]) )^2\ dz\right],  \\
&\partial_t g(t,x) + div_x \Bigg( \Big( {f}(t,x,\rho_{\Phi}[g]) + \chi_{\{ \text{\ibygr{h}}_1=0\}}  \left[\frac{1}{\widehat{\alpha}}   (\widehat{{m}}(t,x,\rho_{\Phi}[g]))^2 \ \nabla_x h(t,x)\right]   \\
& \quad\quad+ \chi_{\{\text{\ibygr{h}}_2=0\}}\left[\frac{1}{\widehat{\alpha}} \bar{{m}}(t,x,\rho_{\Phi}[g]))\ \int g(t,z)\ \nabla_z h(t,z)\  \widehat{{m}}(t,z,\rho_{\Phi}[g])\ dz \right]\ \Big)\ g(t,x)\Bigg)=0,\\
& g(0,x)=g_0(x),\quad h(T,x)={p}(x,\rho_{\Phi}[g](T)).
\end{align}
\end{subequations}
Here,  $\chi$ denotes the indicator function and $$\rho_{\Phi}[g]=\rho_{\Phi}[g](t) = \int\limits_{\R^d} \Phi(y)\ g(t,y)\ dy, $$
the moment. 
In the case of the classical coupling $\chi_{\{\text{\ibygr{h}}_2=0\}}= \chi_{\{\text{\ibygr{h}}_3=0\}}=0$ holds. For the vanishing-coupling $\chi_{\{\text{\ibygr{h}}_1=0\}}=\chi_{\{\text{\ibygr{h}}_2=0\}}= \chi_{\{\text{\ibygr{h}}_3=0\}}=0$ holds. The terms multiplied by $\chi_{\{\text{\ibygr{h}}_1=0\}}$ represent the self-interaction of each player with the field of other players. Whereas the terms multiplied by $\chi_{\{\text{\ibygr{h}}_2=0\}}$ are the interactions of each player with the field of players. Finally, the quantity multiplied by $\chi_{\{\text{\ibygr{h}}_3=0\}}$ appears when the control costs of each agent depends on the field of optimal controls of the other agents.

\section{ Main Result} \label{Main}

We use dynamic programming principle to solve the closed loop problem \cite{bellman2013dynamic, bertsekas1995dynamic}. The value function $V_i(\tau, \boldsymbol{y}),\ \tau\in[0,T],\ \boldsymbol{y}\in\R^{dN}$ is defined as:
\[
V_i(\tau, \boldsymbol{y}):=p_i(\boldsymbol{x}^*(T))- \int\limits_{\tau}^{\top} L_i(t,\boldsymbol{x}^*(t), \boldsymbol{u}^*(t),\boldsymbol{x}^*(t))\ dt,
\]
where $x^*_i(t)=x_i^*(t,\tau,\boldsymbol{y})$ denotes the optimal solution and the vector $\boldsymbol{y}\in\R^{dN}$ is the initial condition of our equation \eqref{stateDyn} at time $\tau$: $\boldsymbol{x}(\tau)=\boldsymbol{y}$. Consequently, the value function $V_i$ is the total payoff for the i-th player at time $\tau$ with initial condition $\boldsymbol{y}$. 

\begin{lemma}
For a small time step $\Delta \tau $ the discrete dynamic programming principle applied to the differential game \eqref{MicroGame}-\eqref{stateDyn} for $i=1,...,N$ is given by:
\begin{subequations}\label{ValueSys}
\begin{align}
V_i(\tau, \boldsymbol{y}) = & 	 - \Delta \tau \Bigg( l(\tau,x_i(\tau), \rho^N_{\Phi} (\boldsymbol{x}(\tau)))+ \frac{\widehat{\alpha}}{2}\ (u_i^*(\tau))^2 +\frac{\bar{\alpha}}{2} \sum\limits_{k=1,\ k\neq i}^N\ (u_k^*(\tau))^2\Bigg)\\
&+ V_i(\tau+ \Delta \tau, \boldsymbol{x}(\tau+\Delta \tau))+ \mathcal{O}(\Delta \tau^2) ,\\
u_i^*(\tau)= &\frac{1}{\widehat{\alpha}} \widehat{m}(\tau, x_i(\tau), \rho^N_{\Phi}(\boldsymbol{x}(\tau)))\ \nabla_{x_i} V_i(\tau+\Delta \tau, \boldsymbol{x}(\tau+\Delta \tau))\\
& +\frac{1}{\widehat{\alpha}}  \sum\limits_{k=1,\ k\neq i}^N \bar{m}(\tau, x_k(\tau), \rho^N_{\Phi}(\boldsymbol{x}(\tau))) \nabla_{x_k} V_i(\tau+\Delta \tau, \boldsymbol{x}(\tau+\Delta \tau))+\mathcal{O}(\Delta \tau^2).
\end{align}
\end{subequations}
\begin{small}
\begin{align}
\begin{cases}
&{x}_i(\tau+\Delta \tau)= y_i+ \Delta \tau\ \Bigg[ f(\tau,x_i(\tau), \rho^N_{\Phi}(\boldsymbol{x}(\tau)))+ \widehat{m}(\tau,x_i(\tau), \rho^N_{\Phi}(\boldsymbol{x}(\tau)))\  u_i^*(\tau)  \\
&+ \bar{m}(\tau,x_i(\tau), \rho^N_{\Phi}(\tau))\ \sum\limits_{k=1,  k\neq i}^N u_k^*(\tau)\Bigg] + \mathcal{O}(\Delta t^2),\\
&x_i(\tau)=y_i. 
\end{cases}
\end{align}
\end{small}
\end{lemma}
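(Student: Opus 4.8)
The plan is to establish the three assertions of the lemma --- the Bellman recursion, the closed-form feedback law for $u_i^*(\tau)$, and the explicit Euler step for the state --- by combining Bellman's principle of optimality with first-order Taylor expansions in $\Delta\tau$, using Assumptions \ref{assI}--\ref{ASym} to reduce the generic cost and dynamics to the claimed closed form. First I would fix the index $i$ and the Nash-optimal feedback controls $\bs{u}^*_{-i}$ of the remaining players. Splitting the integral in the definition of $V_i(\tau,\bs{y})$ at the intermediate time $\tau+\Delta\tau$ and using that the tail of an optimal trajectory on $[\tau+\Delta\tau,T]$ is itself optimal for the subproblem started at $(\tau+\Delta\tau,\bs{x}^*(\tau+\Delta\tau))$ gives the principle of optimality
\[
V_i(\tau,\bs{y})=\max_{u_i}\Big[-\int_{\tau}^{\tau+\Delta\tau}L_i(t,\bs{x}(t),u_i,\bs{u}^*_{-i})\,dt+V_i(\tau+\Delta\tau,\bs{x}(\tau+\Delta\tau))\Big],
\]
existence of the maximizer and differentiability of $V_i$ in its spatial argument being supplied by Proposition \ref{theo1}.

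Next I would expand the two terms on the right. On $[\tau,\tau+\Delta\tau]$ the state and the feedback controls vary by $\mathcal{O}(\Delta\tau)$, so by continuity the rectangle rule yields $\int_{\tau}^{\tau+\Delta\tau}L_i\,dt=\Delta\tau\,L_i(\tau,\bs{x}(\tau),\bs{u}^*(\tau))+\mathcal{O}(\Delta\tau^2)$; inserting the decoupled cost of Assumption \ref{assI}, the quadratic structure of Assumption \ref{cost}, the reduction $l_i(\tau,\bs{x})=l(\tau,x_i,\rho^N_\Phi(\bs{x}))$ of Assumption \ref{ASym}(iii), and $\sum_{k=1}^N\tfrac1N l_i=l_i$, this equals $\Delta\tau\big(l(\tau,x_i(\tau),\rho^N_\Phi(\bs{x}(\tau)))+\tfrac{\widehat{\alpha}}{2}(u_i^*(\tau))^2+\tfrac{\bar{\alpha}}{2}\sum_{k\neq i}(u_k^*(\tau))^2\big)+\mathcal{O}(\Delta\tau^2)$, which is the first displayed line. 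Similarly, integrating $\dot{\bs{x}}=\bs{f}(t,\bs{x},\bs{u})$ over the short interval with $\bs{x}(\tau)=\bs{y}$ and freezing the integrand at $t=\tau$ gives $x_i(\tau+\Delta\tau)=y_i+\Delta\tau\,\bs{f}_i(\tau,\bs{y},\bs{u}^*(\tau))+\mathcal{O}(\Delta\tau^2)$, the remainder being uniform because $\bs{f}$ is $C^1$ and the trajectories stay bounded on $[0,T]$; by the affine decomposition of Assumption \ref{assI} the $i$-th block of $\bs{f}$ is $\bs{f}^0_i+\sum_j\bs{M}^j_i u_j$, and Assumption \ref{ASym}(i)--(ii) identifies $\bs{f}^0_i=f(\tau,x_i,\rho^N_\Phi)$, $\bs{M}^i_i=\widehat{m}(\tau,x_i,\rho^N_\Phi)$, and $\bs{M}^j_i=\bar{m}(\tau,x_i,\rho^N_\Phi)$ for $j\neq i$, giving the stated state update.

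For the feedback law I would substitute this Euler step into the right-hand side of the principle of optimality and differentiate in $u_i$. Since $\partial x_k(\tau+\Delta\tau)/\partial u_i=\Delta\tau\,\bs{M}^i_k+\mathcal{O}(\Delta\tau^2)$, the chain rule and the first-order optimality condition give
\[
0=-\Delta\tau\,\widehat{\alpha}\,u_i^*(\tau)+\Delta\tau\sum_{k=1}^N\nabla_{x_k}V_i(\tau+\Delta\tau,\bs{x}(\tau+\Delta\tau))\cdot\bs{M}^i_k+\mathcal{O}(\Delta\tau^2),
\]
and since $\widehat{\alpha}>0$ the objective is strictly concave in $u_i$, so this critical point is the maximizer; dividing by $\Delta\tau\,\widehat{\alpha}$ and using Assumption \ref{ASym}(ii) to identify $\bs{M}^i_i=\widehat{m}(\tau,x_i,\rho^N_\Phi)$ and $\bs{M}^i_k=\bar{m}(\tau,x_k,\rho^N_\Phi)$ for $k\neq i$ reproduces the claimed expression for $u_i^*(\tau)$. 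Equivalently, this is just Proposition \ref{theo1} and the Lemma following it, applied with Lagrange multiplier $\lambda^i=\nabla_{\bs{x}}V_i(\tau+\Delta\tau,\bs{x}(\tau+\Delta\tau))$, the $\bar{\alpha}$-terms $L_i^k$ ($k\neq i$) dropping out because they are independent of $u_i$. Collecting the three expansions yields the system \eqref{ValueSys}.

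The main obstacle is not any single expansion but the bookkeeping and uniform control of all the $\mathcal{O}(\Delta\tau^2)$ remainders together with the justification of the principle of optimality in the feedback setting: because the opponents use state-dependent strategies, perturbing $u_i$ on $[\tau,\tau+\Delta\tau]$ also perturbs the state and hence the opponents' controls, but this back-reaction is $\mathcal{O}(\Delta\tau)$ at the level of controls and therefore only affects the recursion at order $\mathcal{O}(\Delta\tau^2)$; making this precise --- and in particular ensuring $V_i$ is differentiable in $\bs{y}$ so that the differentiation in the feedback-law step is meaningful --- is precisely where Bressan's existence and uniqueness result (Proposition \ref{theo1}) and the $C^1$ and Lipschitz hypotheses of Assumption \ref{ASym} enter.
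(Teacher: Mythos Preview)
Your proposal is correct and follows essentially the same approach as the paper: Bellman's principle of optimality, rectangle-rule approximation of the running-cost integral, explicit Euler step for the state, and the first-order optimality condition to extract $u_i^*(\tau)$. The paper's proof is considerably terser --- it simply states these four ingredients in sequence without the additional bookkeeping you provide on the $\mathcal{O}(\Delta\tau^2)$ remainders, the concavity argument, or the feedback back-reaction issue --- so your write-up is, if anything, a more careful version of the same argument.
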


\begin{proof}
For any time step $\Delta \tau>0$ by the dynamic programming principle 
\begin{align}
V_i(\tau, \boldsymbol{y}) =  	\max\limits_{u_i: [\tau,\tau + \Delta \tau]\to \R} -\int\limits_{\tau}^{\tau+\Delta \tau } L_i(t,\boldsymbol{x},u_i, \boldsymbol{u}^*_{-i})\ dt  + V_i(\tau+ \Delta \tau, \boldsymbol{x}(\tau+\Delta \tau)),\label{BellmanPrinciple}
\end{align}
holds. As $\Delta \tau$ shrinks to zero the choice of the unknown reduces to the choice of $u^*_i(\tau)$
\begin{subequations}
\begin{align}
V_i(\tau, \boldsymbol{y}) =  	\max\limits_{u_i(\tau) \in \R}& - \Delta \tau \left( l(\tau,x_i(\tau), \rho^N_{\Phi} (\boldsymbol{x}(\tau)))+ \frac{\widehat{\alpha}}{2}\ (u_i(\tau))^2+\frac{\bar{\alpha}}{2} \sum\limits_{k=1,\ k\neq i}^N\ (u_k^*(\tau))^2\right) \\
&+ V_i(\tau+ \Delta \tau, \boldsymbol{x}(\tau+\Delta \tau))+ \mathcal{O}(\Delta \tau^2).\label{ApproxBell}
\end{align}
\end{subequations}
The first order approximation of our state dynamics on $\tau \leq t \leq \tau+\Delta \tau$ are given by:
\begin{subequations}
\begin{align}
x_i(\tau+\Delta \tau)=y_i+ \Delta \tau \Bigg( & f(\tau,x_i(\tau), \rho^N_{\Phi}(\boldsymbol{x}(\tau)))+ \widehat{m}(\tau,x_i(\tau), \rho^N_{\Phi}(\boldsymbol{x}(\tau))) \ u_i(\tau) \\
&+ \bar{m}(\tau,x_i(\tau), \rho^N_{\Phi}(\boldsymbol{x}(\tau)))\ \sum\limits_{k=1,  k\neq i}^N u_k^*(\tau)    \Bigg) + \mathcal{O}(\Delta t^2).\label{Euler}
\end{align}
\end{subequations}
We solve the optimization problem on $(\tau, \tau+ \Delta t)$ and obtain by the necessary condition the value $u_i^*(\tau)$
\begin{subequations}
\begin{align}
u_i^*:=u_i^*(\tau)= &\frac{1}{\widehat{\alpha}} \widehat{m}(\tau, x_i(\tau), \rho^N_{\Phi}(\boldsymbol{x}(\tau)))\ \nabla_{x_i} V_i(\tau+\Delta \tau, \boldsymbol{x}(\tau+\Delta \tau))\\
& +\frac{1}{\widehat{\alpha}}  \sum\limits_{k=1,\ k\neq i}^N \bar{m}(\tau, x_k(\tau), \rho^N_{\Phi}(\boldsymbol{x}(\tau))) \nabla_{x_k} V_i(\tau+\Delta \tau, \boldsymbol{x}(\tau+\Delta \tau))+\mathcal{O}(\Delta \tau^2).
\end{align}
\end{subequations}

\end{proof}

\paragraph{Averaged Bellmann Principle}
In this section, we reduce the large system \eqref{ValueSys} of $N$ value functions $V_i$ to an averaged value function $\langle W \rangle_N$. 
The goal is to show the value function $V_i$ only depends on $y_i$ and on all the other variables $\boldsymbol{y}_{-i}:= (y_1,...,y_{i-1}, y_{i+1},...,y_N)^{\top}\in\R^{(N-1)\ d}$ only through the empirical moment $\rho^{N-1}_{\Phi,i}:=\rho^{N-1}_{\Phi}(\boldsymbol{x}_{-i})= \frac{1}{N-1} \sum\limits_{k=1,\\ k\neq i}^N \Phi(\boldsymbol{x}_k)$. The value function $W$ then only depends on $y_i$ and the empirical moment $\rho^{N-1}_{\Phi, i}$. Without any rescaling it is in general not possible to identify $V_i$ by a value function $W$ since the system \eqref{ValueSys} as well as the state dynamics \eqref{Euler} depend on $x_j, j\neq i$ not only by empirical moments. In fact, we  introduce a mean field scaling such that a dependence of the value function $W$ is only on $\rho^N_{\Phi,i}$. 
\begin{definition}\label{GlobAs}
Assume that Assumption \ref{ASym} is satisfied for a fixed but arbitrary function $\Phi(\cdot)$. Then, we define the averaged  Bellman system depending on two scaling parameter $\bar{a},\bar{\theta}\geq1$. 
We call a $C^2$ function  $W: [\tau, \tau +\Delta \tau]\times \R^d \times \R\to \R$ a solution to the HJB system if for all $y_i\in\R^d$ and  $\rho^{N-1}_{\Phi, i}\in\R$ equations \eqref{SymBellman} hold. 

\begin{subequations}\label{SymBellman}
\begin{align} 
&\left\langle W(\tau, y_i, \rho^{N-1}_{\Phi, i})\right\rangle_N =  	 - \Delta \tau \Bigg\langle \Bigg( l(\tau,y_i, \rho^N_{\Phi} )+ \frac{\widehat{\alpha}}{2}\ (p_i^*(\tau))^2 +\frac{\bar{\alpha}}{N^{\bar{a}}\ 2}\sum\limits_{k=1,\ k\neq i}^N\ (p_k^*(\tau))^2 \Bigg)\label{scaleA}\\
&\quad+ W\Big(\tau+ \Delta \tau,v_i(\tau+\Delta \tau), \rho^{N-1}_{\Phi, i}(\tau+\Delta \tau)\Big)\Bigg\rangle_N,\\
&\begin{cases}
 v_i(\tau+\Delta \tau)  =  y_i+\Delta \tau \Bigg[ f(\tau,y_i, \rho^N_{\Phi})+ \widehat{m}(\tau,y_i, \rho^N_{\Phi})\ p_i^*(\tau) \\
\quad +     \frac{1}{N^{\bar{\theta}}} \bar{m}(\tau,y_i, \rho^N_{\Phi}) \sum\limits_{k=1,  k\neq i}^N p_k^*(\tau)\Bigg],\label{scaleM}\\
 v_i(\tau)=  y_i,\quad i=1,...,N,
\end{cases}\\
&p^*_i:=\frac{1}{\widehat{\alpha}} \widehat{m}(\tau, v_i(\tau), \rho^N_{\Phi})\ \nabla_{v_i} {W}(\tau+\Delta \tau, v_i(\tau+\Delta t),  \rho^{N-1}_{\Phi, i}(\tau+\Delta \tau)).
\end{align}
\end{subequations}
\end{definition}
Here, we use $\rho^{N-1}_{\Phi, i}(\tau+\Delta \tau)$ as short hand notation for $\rho^{N-1}_{\Phi}(\boldsymbol{v}_{-i}(\tau+\Delta \tau))$
and  $\langle \cdot\rangle_{N}$ denote the averaging with respect to all agents: $\langle \cdot\rangle_{N}:= \frac1N \sum\limits_{i=1}^N (\cdot)$.
The function $W(\tau, y_i,\rho^{N-1}_{\Phi, i})$ is not symmetric in $y_i$ but the empirical moment $\rho^{N-1}_{\Phi, i}$ is symmetric with respect to the variables $\boldsymbol{y}_{-i}:=(y_1,...,y_{i-1}, y_{i+1},...,y_N)^{\top}\in \R^{d(N-1)}$. As next step we connect $W$ with the value function of the $i-$th agent. 
\begin{corollary}\label{Connect}
Assume that a unique $C^2$ solution $\langle W\rangle_N$ of \eqref{SymBellman} with respect to the second and third component exists. 
Additionally we assume that $\bar{W}: [\tau, \tau+\Delta \tau]\times \R^d \times \R\to \R$ is a unique $C^2$ solution in the sense of Definition \ref{GlobAs} of the system \eqref{eqCon}-\eqref{optu}
\begin{subequations}
\begin{align} \label{eqCon}
\langle \bar{W}(\tau,y_i,\bar{\rho}^{N-1}_{\Phi,i})\rangle_N = &  - \Delta \tau  \left\langle \Bigg( l(\tau,y_i, \bar{\rho}^N_{\Phi,i} )+ \frac{\widehat{\alpha}}{2}\ (\bar{u}_i^*(\tau))^2 +\frac{1}{N^{\bar{a}}}\frac{\bar{\alpha}}{2} \sum\limits_{k=1,\ k\neq i}^N\ (\bar{u}_k^*(\tau))^2\Bigg) \right\rangle_N \\
&+ \langle \bar{W}(\tau+ \Delta \tau ,\bar{x}_i(\tau + \Delta \tau),\bar{\rho}^{N-1}_{\Phi,i}(\tau+\Delta \tau)) \rangle_N,
\end{align}
\end{subequations}
\begin{small}
\begin{align}\label{eqConDyn}
\begin{cases}
&\bar{x}_i(\tau+\Delta \tau)= y_i+ \Delta \tau\ \Bigg[ f(\tau,y_i, \bar{\rho}^N_{\Phi,i})+ \widehat{m}(\tau,y_i,\bar{ \rho}^N_{\Phi,i})\  \bar{u}_i^*(\tau)  \\
&+ \frac{1}{N^{\bar{\theta}}}\bar{m}(\tau,y_i, \bar{\rho}^N_{\Phi,i})\ \sum\limits_{k=1,  k\neq i}^N \bar{u}_k^*(\tau)\Bigg],\\
&\bar{x}_i(\tau)=y_i. 
\end{cases}
\end{align}
\end{small}
with
\begin{subequations}\label{optu}
\begin{align}
\bar{u}_i^*(\tau):=  &\frac{1}{\widehat{\alpha}} \widehat{m}(\tau, y_i, \bar{\rho}^N_{\Phi,i})\ \nabla_{\bar{x}_i} \bar{W}(\tau+ \Delta \tau ,\bar{x}_i(\tau + \Delta \tau),\bar{\rho}^{N-1}_{\Phi,i}(\tau + \Delta \tau))\\
& +\frac{1}{\widehat{\alpha}} \frac{1}{N^{\bar{\theta}}} \sum\limits_{k=1,\ k\neq i}^N \bar{m}(\tau, y_k, \bar{\rho}^N_{\Phi,i})\  \partial_3 \bar{W}(\tau+ \Delta \tau ,\bar{x}_i(\tau + \Delta \tau),\bar{\rho}^{N-1}_{\Phi,i}(\tau+\Delta \tau))\frac{ \nabla_{x_k} \Phi(x_k)}{N}   . 
\end{align}
\end{subequations}
Furthermore, we assume
\begin{align*}
& |\bar{W}(\tau+\Delta \tau,z,\rho)-W(\tau+\Delta \tau,\tilde{z},\tilde{\rho} )|\leq \frac{C}{N^{\bar{\theta}}}(|z-\tilde{z}|+ |\rho-\tilde{\rho}|),\ \forall \rho, \tilde{\rho}\in\R,\  z,\tilde{z} \in \R^d,\\
&\Big| \nabla_{z} \bar{W}(\tau+ \Delta \tau ,z,\rho)-\nabla_{z} W(\tau+ \Delta \tau ,\tilde{z},\tilde{\rho})  \Big| \leq \frac{C}{N^{\bar{\theta}}} (|z-\tilde{z}|+ |\rho-\tilde{\rho}|),\  \forall \rho, \tilde{\rho}\in\R,\  z, \tilde{z} \in \R^d,    \\
& | \Phi(z) - \Phi(\tilde{z})| \leq C |z-\tilde{z}|,\ \forall z,\tilde{z} \in \R^d,\\
&|\widehat{m}(t, z, \rho)| \leq C,\quad |\bar{m}(t, z, \rho)| \leq C,\quad |\nabla \Phi( z)| \leq C,\quad\forall\  t\in[\tau, \tau+\Delta \tau],\  \rho\in\R,\  z \in \R^d.
\end{align*}
Then the following inequalities 
\begin{align*}
&|\langle \bar{W}(\tau, y_i, \rho)-{W}(\tau, y_i, \rho)\rangle_N|\leq \frac{C}{N^{\bar{\theta}}} ,\quad \forall\ \rho\in\R,\\
&|\langle \nabla_{y_i}\bar{W}(\tau, y_i,\rho )-  \nabla_{y_i}{W}(\tau, y_i, \rho) \rangle_N|\leq \frac{C}{N^{\bar{\theta}}}  , \quad \forall\ \rho\in\R,\\
& | \langle \bar{x}_i(\tau+\Delta \tau)-v_i(\tau+\Delta \tau)\rangle_N| \leq \frac{C}{N^{\bar{\theta}}},
\end{align*}
 hold.
\end{corollary}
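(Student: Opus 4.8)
The plan is to compare the two one–step Bellman recursions \eqref{SymBellman} and \eqref{eqCon}--\eqref{optu} term by term, average over the agent index with $\langle\cdot\rangle_N$, and close the resulting estimates by a discrete Gronwall argument valid for $\Delta\tau$ small. Introduce the averaged per–agent errors $E_u:=\langle|\bar u_i^*(\tau)-p_i^*(\tau)|\rangle_N$, $E_x:=\langle|\bar x_i(\tau+\Delta\tau)-v_i(\tau+\Delta\tau)|\rangle_N$ and $E_\rho:=\langle|\bar\rho^{N-1}_{\Phi,i}(\tau+\Delta\tau)-\rho^{N-1}_{\Phi,i}(\tau+\Delta\tau)|\rangle_N$ (and analogously for the $N$–moments entering $f,\widehat m,\bar m$). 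Since $v_i(\tau)=\bar x_i(\tau)=y_i$, the moments entering $p_i^*$ and $\bar u_i^*$ coincide at the beginning of the step, so all discrepancies are generated inside $[\tau,\tau+\Delta\tau]$ and are therefore either proportional to $\Delta\tau$ or to the genuinely new coupling term appearing in \eqref{optu} but not in \eqref{SymBellman}. The two structural facts I will use repeatedly are: every interaction sum $\sum_{k\neq i}$ in the dynamics and controls carries a factor $N^{-\bar\theta}$ and every interaction sum in the running cost carries $N^{-\bar a}$, with $\bar\theta,\bar a\ge1$, so that $\tfrac{1}{N^{\bar\theta}}\sum_{k\neq i}(\cdot)_k\le N^{1-\bar\theta}\langle(\cdot)_k\rangle_N$ with $N^{1-\bar\theta}\le1$, and the product of such a factor with an $O(N^{-\bar\theta})$ quantity is $O(N^{1-2\bar\theta})\le O(N^{-\bar\theta})$.

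First I would estimate $E_u$. Write $\bar u_i^*-p_i^*$ as (a) the term present only in \eqref{optu}, $\tfrac{1}{\widehat\alpha N^{\bar\theta}}\sum_{k\neq i}\bar m(\tau,y_k,\bar\rho^N_{\Phi,i})\,\partial_3\bar W(\ldots)\,\tfrac{\nabla_{x_k}\Phi(x_k)}{N}$, plus (b) the difference of the self–interaction terms $\tfrac1{\widehat\alpha}\bigl(\widehat m(\tau,y_i,\bar\rho)\nabla\bar W(\ldots)-\widehat m(\tau,y_i,\rho)\nabla W(\ldots)\bigr)$. Term (a) is a Riemann sum $\tfrac1N\sum_{k\neq i}$ of quantities bounded by the assumed bounds $|\bar m|\le C$, $|\nabla\Phi|\le C$ and by $|\partial_3\bar W|\le C$ (a regularity property of the $C^2$ solution), so it is $O(1/N^{\bar\theta})$ directly. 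Term (b) I split as $\widehat m(\bar\rho)[\nabla\bar W-\nabla W]+[\widehat m(\bar\rho)-\widehat m(\rho)]\nabla W$ and bound it, using the assumed Lipschitz bound on $\nabla\bar W-\nabla W$ at time $\tau+\Delta\tau$, the Lipschitz continuity of $\widehat m$ from Assumption~\ref{ASym}, and boundedness of $\nabla W$, by $\tfrac{C}{N^{\bar\theta}}(E_x+E_\rho)+C\,E_\rho$. Averaging gives $E_u\le \tfrac{C}{N^{\bar\theta}}+C\,E_\rho+\tfrac{C}{N^{\bar\theta}}E_x$.

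Next, subtracting \eqref{scaleM} from \eqref{eqConDyn}, the trajectory error at $\tau+\Delta\tau$ is $\Delta\tau$ times the sum of: the Lipschitz difference of $f$ (controlled by $E_\rho$); the difference of the $\widehat m\,u^*$ terms, $\widehat m(\bar\rho)(\bar u_i^*-p_i^*)+(\widehat m(\bar\rho)-\widehat m(\rho))p_i^*$, controlled by $C\,E_u+C\,E_\rho$ using boundedness of $\widehat m$ and of the controls; and $\tfrac{1}{N^{\bar\theta}}\bigl(\bar m(\bar\rho)\sum_{k\neq i}\bar u_k^*-\bar m(\rho)\sum_{k\neq i}p_k^*\bigr)$, which by the remark above is bounded by $N^{1-\bar\theta}(C\,E_u+C\,E_\rho)\le C\,E_u+C\,E_\rho$. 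Since $\bar\rho^{N-1}_{\Phi,i}$ and $\rho^{N-1}_{\Phi,i}$ at $\tau+\Delta\tau$ are moments of $\Phi$ along trajectories that coincide at $\tau$ and share the same normalization, Lipschitz continuity of $\Phi$ yields $E_\rho\le C\,E_x$ (and likewise for the $N$–moments). Combining, $E_x\le C\Delta\tau(E_u+E_\rho)$ and $E_u\le \tfrac{C}{N^{\bar\theta}}+C\,E_x$, so for $\Delta\tau\le\Delta\tau_0$ small the coupled system closes and yields $E_u,E_x,E_\rho\le C/N^{\bar\theta}$; in particular $|\langle\bar x_i(\tau+\Delta\tau)-v_i(\tau+\Delta\tau)\rangle_N|\le E_x\le C/N^{\bar\theta}$, the third asserted inequality.

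Finally, for the value–function estimate I insert these bounds into \eqref{scaleA}--\eqref{eqCon}: the running–cost difference is controlled by $C\,E_\rho$ (the $l$ terms), $C\,E_u$ (the self–control term) and $C N^{1-\bar a}E_u\le C E_u$ (the interaction–control term, using $\bar a\ge1$), while the value at $\tau+\Delta\tau$ contributes $\langle|\bar W-W|\rangle_N$ at the shifted arguments, bounded by the assumed closeness plus Lipschitz transport by $E_x+E_\rho$; all terms are $O(1/N^{\bar\theta})$, giving the first inequality. For the gradient inequality, differentiate \eqref{scaleA}, \eqref{eqCon} in $y_i$: by the envelope theorem the sensitivities of the running cost with respect to the optimal control drop out, the chain–rule factor satisfies $\nabla_{y_i}v_i(\tau+\Delta\tau)=I+O(\Delta\tau)$, and the cross–sensitivities $\nabla_{y_i}v_k(\tau+\Delta\tau)$, $k\neq i$, are $O(\Delta\tau/N^{\bar\theta})$ because $v_k$ depends on $y_i$ only through the $N^{-\bar\theta}$–scaled interaction term, so the estimate reduces to the assumed bound on $\nabla\bar W-\nabla W$ at $\tau+\Delta\tau$ together with the $C^2$ (second–derivative) bounds on the data, again closed by discrete Gronwall. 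The main obstacle is exactly this implicit, fixed–point structure of one DPP step — controls, trajectories and value gradients at the two endpoints are mutually defined — so the three errors cannot be bounded sequentially but must be treated as one coupled system and absorbed using smallness of $\Delta\tau$ and the a priori $C^2$ existence assumed in the hypotheses; a secondary technical point is the careful bookkeeping ensuring that every cross term really carries a factor $N^{-\bar\theta}$ or $N^{-\bar a}$ and that the $N$ versus $N-1$ moment normalizations of $\bar\rho$ and $\rho$ are taken to coincide at time $\tau$.
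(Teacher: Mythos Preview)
Your proposal is essentially the same strategy as the paper: isolate the extra summand in $\bar u_i^*$ (the $\partial_3\bar W$ term) as the $O(N^{-\bar\theta})$ source, use the assumed $\tfrac{C}{N^{\bar\theta}}$--Lipschitz bound on $\nabla\bar W-\nabla W$ to control the remaining control difference by the trajectory difference, feed this into the dynamics, and close the resulting implicit inequality. Two differences are worth pointing out.

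\textbf{Closing the coupled estimate.} You absorb the feedback between $E_u$ and $E_x$ via the factor $\Delta\tau$ coming from the Euler step, hence need $\Delta\tau$ small. The paper instead keeps track of the $\tfrac{C}{N^{\bar\theta}}$ prefactor from the Lipschitz hypothesis on $\nabla\bar W-\nabla W$, obtaining
\[
|\bar x_i-v_i|\le \tfrac{C}{N^{\bar\theta}}+\tfrac{C}{N^{\bar\theta}}\,|\bar x_i-v_i|,
\]
and closes by taking $N$ large rather than $\Delta\tau$ small. Both are valid, but the paper's route uses the hypotheses more fully and does not introduce an extraneous smallness condition on the step size. (Relatedly, your term ``$+C\,E_\rho$'' from $[\widehat m(\bar\rho)-\widehat m(\rho)]\nabla W$ actually vanishes, since $\widehat m$ is evaluated at time $\tau$ where the moments coincide; once you drop it you recover the paper's $\tfrac{C}{N^{\bar\theta}}$ contraction directly.)

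\textbf{Gradient estimate.} For $|\langle\nabla_{y_i}\bar W-\nabla_{y_i}W\rangle_N|$ you differentiate the one--step recursion and invoke the envelope theorem together with the $O(\Delta\tau/N^{\bar\theta})$ size of the cross--sensitivities $\nabla_{y_i}v_k$. The paper instead writes a zeroth--order Taylor relation linking $\nabla_y(\bar W-W)$ at $\tau$ to the assumed bound on $\nabla_2(\bar W-W)$ and $\partial_3(\bar W-W)$ at $\tau+\Delta\tau$. Your argument is the more transparent of the two and makes explicit why the cross terms are harmless; the paper's Taylor step is terser but relies on the same inputs.
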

\begin{proof}
For simplicity we only consider the $i$-th value function and do not consider the average,  $\langle \cdot \rangle_N $ . First, we note that 
\begin{align*}
 \partial_3 \bar{W}(\tau+\Delta \tau ,\bar{x}_i,\rho^{N-1}_{\Phi}(\boldsymbol{\bar{x}}_{-i}))\ \frac{\nabla_{\bar{x}_k} \Phi(\bar{x}_k)}{N}
 = \nabla_{\bar{x}_k} \bar{W}(\tau+\Delta \tau , \bar{x}_i,\rho^{N-1}_{\Phi}(\boldsymbol{\bar{x}}_{-i}))    ,
\end{align*}
for $k\neq i$ holds. Thus, we conclude that
$$
|\nabla_{\bar{x}_k} \bar{W}(\tau+\Delta \tau , \bar{x}_i(\tau+\Delta \tau),\rho^{N-1}_{\Phi}(\boldsymbol{\bar{x}}_{-i}(\tau+\Delta \tau))) | \leq \frac{C}{N},
$$
holds since $|\nabla \Phi(\bar{x}_k)| <C$ and  $|\partial_3 \bar{W}| <C$ by assumption. Consequently 
$$
\Big|\frac{1}{\widehat{\alpha}} \frac{1}{N^{\bar{\theta}}} \sum\limits_{k=1,\ k\neq i}^N \bar{m}(\tau, y_k, \bar{\rho}^N_{\Phi,i}) \nabla_{\bar{x}_k} \bar{W}(\tau+ \Delta \tau ,\bar{x}_i(\tau + \Delta \tau),\rho^{N-1}_{\Phi}(\boldsymbol{\bar{x}}_{-i}(\tau+\Delta \tau))) \Big| \leq \frac{C}{N^{\bar{\theta}}},
$$
holds as well. 
Therefore, we obtain:
\begin{align*}
\Big| \bar{u}_i^*(\tau) - \frac{1}{\widehat{\alpha}} \widehat{m}(\tau, y_i, \bar{\rho}^N_{\Phi,i})\ \nabla_{\bar{x}_i} \bar{W}(\tau+ \Delta \tau , \bar{x}_i(\tau + \Delta \tau),\bar{\rho}^{N-1}_{\Phi,i} (\tau+\Delta \tau))\Big|\leq \frac{C}{N^{\bar{\theta}}}.
\end{align*}
Hence, the following inequality
\begin{align*}
|\bar{u}_i^*(\tau)-p_i^*(\tau)|&\leq \frac{C}{N^{\bar{\theta}}}+ C\ \Big| \nabla_{\bar{x}_i} \bar{W}(\tau+ \Delta \tau ,\bar{x}_i(\tau + \Delta \tau),\bar{\rho}^{N-1}_{\Phi, i}(\tau+\Delta \tau))\\
&\quad\quad\quad\quad-\nabla_{v_i} W(\tau+ \Delta \tau ,v_i(\tau + \Delta \tau),\rho^{N-1}_{\Phi, i}(\tau +\Delta t))  \Big|\\
&\stackrel{Ass}{\leq} \frac{C}{N^{\bar{\theta}}} + \frac{C}{N^{\bar{\theta}}}\ |\bar{x}_i(\tau+\Delta \tau)- v_i(\tau+\Delta \tau)  | \\
&\quad +  \frac{C}{N^{\bar{\theta}}}\ |\Phi\big( \bar{x}_k(\tau+\Delta \tau)\big)- \Phi\big( v_k(\tau+\Delta \tau) \big) |\\
&\leq  \frac{C}{N^{\bar{\theta}}} + \frac{C}{N^{\bar{\theta}}}\ |\bar{x}_i(\tau+\Delta \tau)- v_i(\tau+\Delta \tau)  |,
\end{align*}
is fulfilled. Then it immediately follows that
\begin{align*}
|\bar{x}_i(\tau+\Delta t) - v_i(\tau+\Delta t) | &\leq    | f(\tau, y, \bar{\rho}^{N-1}_{\Phi,i})- f(\tau, y, \rho^{N-1}_{\Phi, i}) |+   C\  |u_i^*(\tau)-p_i^*(\tau)| \\
&\quad + \frac{C}{N^{\bar{\theta}}}  N \ |u_k^*(\tau)-p_k^*(\tau)|    \\
&\stackrel{Sym}{\leq} 0+ \Big(  C+ \frac{C}{N^{\bar{\theta}}} N \Big) \Big( \frac{C}{N^{\bar{\theta}}} + \frac{C}{N^{\bar{\theta}}}\ |\bar{x}_i(\tau+\Delta t) - v_i(\tau+\Delta t) |     \Big)\\
& \leq  \frac{C}{N^{\bar{\theta}}} + \frac{C}{N^{\bar{\theta}}}\ |\bar{x}_i(\tau+\Delta t) - v_i(\tau+\Delta t) | ,
\end{align*}
holds and we get:
\begin{align*}
|\bar{x}_i(\tau+\Delta t) - v_i(\tau+\Delta t) | \leq \frac{C}{N^{\bar{\theta}}} \Big(  1-\frac{C}{N^{\bar{\theta}}} \Big)^{-1} = \frac{C}{N^{\bar{\theta}}-C}\leq \frac{C}{N^{\bar{\theta}}},
\end{align*}
for $N$ sufficiently large. 
As next step we use the assumption  
\begin{align*}
& |\bar{W}(\tau+\Delta \tau,\bar{z},\bar{\rho})-W(\tau+\Delta \tau,z,\rho)|\leq \frac{C}{N^{\bar{\theta}}} (|\bar{z}-z|+ |\bar{\rho}-\rho|),\\
\end{align*}
to obtain the following estimate:
\begin{align*}
& |\bar{W}(\tau,y_i,\bar{\rho}^{N-1}_{\Phi, i})-W(\tau,y_i,\rho^{N-1}_{\Phi, i})| \\
&\leq    |\bar{W}(\tau+\Delta \tau ,\bar{x}_i(\tau+\Delta \tau),\bar{\rho}^{N-1}_{\Phi,i}(\tau + \Delta \tau))-W(\tau+\Delta \tau ,v_i(\tau+\Delta \tau),\rho^{N-1}_{\Phi, i}(\tau +\Delta \tau))|    \\
&+\Bigg|\frac{\widehat{\alpha}}{2}\ [(\bar{u}_i^*(\tau))^2-(p_i^*(\tau))^2]+\frac{\bar{\alpha}}{2} \frac{1}{N^{\bar{a}}}\sum\limits_{k=1,\ k\neq i}^N\ (\bar{u}_k^*(\tau))^2-(p_k^*(\tau))^2 \Bigg| \\
&\leq \frac{C}{N^{\bar{\theta}}} \ \Big|(|\bar{x}_i(\tau+\Delta \tau)-v_i(\tau+\Delta \tau)  | + |\bar{\rho}^{N-1}_{\Phi,i}(\tau + \Delta \tau)-\rho^{N-1}_{\Phi, i}(\tau +\Delta \tau) |\Big)       \\
&\quad \quad+ \Big(   \frac{\widehat{\alpha}}{2} +\frac{\bar{\alpha}}{2}  \frac{N}{N^{\bar{a}}}  \Big)   \Big| (\bar{u}_i^*(\tau))^2-(p_i^*(\tau))^2 \Big|   \\
&\leq   \frac{C}{N^{\bar{\theta}}}\Big( \frac{C}{N^{\bar{\theta}}} + \frac{C}{N^{\bar{\theta}}}   \Big)+ \Big(   \frac{\widehat{\alpha}}{2} +\frac{\bar{\alpha}}{2}  \frac{N}{N^{\bar{a}}}  \Big) \Big|(\frac{1}{\widehat{\alpha}} \widehat{m}(\tau, y_i, \bar{\rho}^N_{\Phi,i})\ \nabla_{\bar{x}_i} \bar{W}(\tau+ \Delta \tau ,\bar{x}_i(\tau + \Delta \tau),\bar{\rho}^{N-1}_{\Phi,i}(\tau+\Delta \tau)))^2\\
&\quad - (\frac{1}{\widehat{\alpha}} \widehat{m}(\tau, y_i, \rho^N_{\Phi,i})\ \nabla_{v_i} {W}(\tau+\Delta \tau, v_i(\tau+\Delta t), \rho^{N-1}_{\Phi, i}(\tau+\Delta \tau)))^2  \Big|\\
& \quad    + \Big(   \frac{\widehat{\alpha}}{2} +\frac{\bar{\alpha}}{2}  \frac{N}{N^{\bar{a}}}  \Big)  \Big|\frac{C^2}{\widehat{\alpha}^2} \frac{N}{N^{\bar{\theta}}}  \nabla_{\bar{x}_k} \bar{W}(\tau+ \Delta \tau ,\bar{x}_i(\tau + \Delta \tau),\rho^{N-1}_{\Phi}(\boldsymbol{\bar{x}}_{-i}(\tau+\Delta \tau)))\\
&\quad  \nabla_{\bar{x}_i} \bar{W}(\tau+ \Delta \tau ,\bar{x}_i(\tau + \Delta \tau),\bar{\rho}^{N-1}_{\Phi,i}(\tau+\Delta \tau))   \Big|\\
&\quad + \Big(   \frac{\widehat{\alpha}}{2} +\frac{\bar{\alpha}}{2}  \frac{N}{N^{\bar{a}}}  \Big) \ \frac{C^2}{\widehat{\alpha}^2} \frac{N^2}{N^{2\ \bar{\theta}}}   \Big| \Big( \nabla_{\bar{x}_k} \bar{W}(\tau+ \Delta \tau ,\bar{x}_i(\tau + \Delta \tau),\rho^{N-1}_{\Phi}(\boldsymbol{\bar{x}}_{-i}(\tau+\Delta \tau)))   \Big)^2    \Big| \\
&\leq    \frac{C}{N^{\bar{\theta}}}+ \Big(   \frac{\widehat{\alpha}}{2} +\frac{\bar{\alpha}}{2}  \frac{N}{N^{\bar{a}}}  \Big) \frac{C}{\widehat{\alpha}} \Big| \nabla_{\bar{x}_i} \bar{W}(\tau+ \Delta \tau ,\bar{x}_i(\tau + \Delta \tau),\bar{\rho}^{N-1}_{\Phi,i}(\tau+\Delta \tau))\\
&\quad - \nabla_{v_i} {W}(\tau+\Delta \tau, v_i(\tau+\Delta t),  \rho^{N-1}_{\Phi, i}(\tau+\Delta \tau))\Big|\\
&\quad   \Big|  \nabla_{\bar{x}_i} \bar{W}(\tau+ \Delta \tau ,\bar{x}_i(\tau + \Delta \tau),\bar{\rho}^{N-1}_{\Phi,i}(\tau+\Delta \tau))+ \nabla_{v_i} {W}(\tau+\Delta \tau, v_i(\tau+\Delta \tau),  \rho^{N-1}_{\Phi, i}(\tau+\Delta \tau))  \Big| \\
&\quad + \Big(   \frac{\widehat{\alpha}}{2} +\frac{\bar{\alpha}}{2}  \frac{N}{N^{\bar{a}}}  \Big) \Big(   \frac{C^2}{\widehat{\alpha}^2} \frac{N}{N^{\bar{\theta}}}  \frac{C^2}{N}   + \ \frac{C^2}{\widehat{\alpha}^2} \frac{N^2}{N^{2\ \bar{\theta}}} \frac{C^2}{N^2}   \Big) \\
&\leq \frac{C}{N^{\bar{\theta}}}.
\end{align*}
It remains to show that the gradient satisfies the same growth conditions. Therefore we apply a Taylor approximation of zeroth order and get: 
\begin{align*}
&\Big| \nabla_y \big( \bar{W}(\tau, y,\bar{\rho}^{N-1}_{\Phi,i})-W(\tau, y, \rho^{N-1}_{\Phi, i}) \big)  \Big|   \\
&= \Big| \nabla_{2} \big( \bar{W}(\tau+\Delta \tau, \bar{x}_i(\tau+\Delta \tau),\bar{\rho}^{N-1}_{\Phi,i}(\tau+\Delta \tau) )-W(\tau+\Delta \tau, \bar{x}_i(\tau+\Delta \tau), \bar{\rho}^{N-1}_{\Phi,i}(\tau+\Delta \tau)) \big) \\
&\quad  \big(y-\bar{x}_i(\tau+\Delta \tau)\big)\\
&\quad  +  \partial_3  \big( \bar{W}(\tau+\Delta \tau, \bar{x}(\tau+\Delta \tau),\bar{\rho}^{N-1}_{\Phi, i} (\tau+\Delta \tau) )-W(\tau+\Delta \tau, \bar{x}(\tau+\Delta \tau), \bar{\rho}^{N-1}_{\Phi,i}(\tau+\Delta \tau)) \big) \\ &\quad \big(\rho^{N-1}_{\Phi, i}(\tau)-\bar{\rho}^{N-1}_{\Phi,i}(\tau+\Delta \tau)\big)  \Big|\\
&\leq \frac{C}{N^{\bar{\theta}}} \Delta \tau +  \frac{C}{N^{\bar{\theta}}}  | \Phi(y_k)- \Phi(x_k(\tau+\Delta \tau))  |        \leq \frac{C}{N^{\bar{\theta}}}.
\end{align*}

\end{proof}

\begin{remark}
The assumed rates for the difference of the value functions $W,\ \bar{W}$ in Corollary \ref{Connect} can be expected since the dynamics of $W$ and $\bar{W}$ only differ in the optimal control by a quantity of order $\mathcal{O}\Big( \frac{1}{N^{\bar{\theta}}}\Big)$.
\end{remark}

As next step we are ready to bound the error between the function $\langle W \rangle_N$ of \eqref{SymBellman} and the averaged value functions $\langle V_i \rangle_N$ of \eqref{ValueSys}. 

\begin{corollary}
Assume that unique solutions $V_i,\ i=1,...,N$ in the sense of Definition $\ref{GlobAs}$ of the system
\begin{subequations}\label{newV}
\begin{align}
\langle V_i(\tau, \boldsymbol{y})\rangle_N = & 	 - \Delta \tau  \Bigg\langle \Bigg( l(\tau,x_i(\tau), \rho^N_{\Phi} (\boldsymbol{x}(\tau)))+ \frac{\widehat{\alpha}}{2}\ (u_i^*(\tau))^2 +\frac{\bar{\alpha}}{2\ N^{\bar{a}}} \sum\limits_{k=1,\ k\neq i}^N\ (u_k^*(\tau))^2\Bigg)\Bigg\rangle_N\\
&+ \langle V_i(\tau+ \Delta \tau, \boldsymbol{x}(\tau+\Delta \tau))\rangle_N ,\\
u_i^*(\tau)= &\frac{1}{\widehat{\alpha}} \widehat{m}(\tau, x_i(\tau), \rho^N_{\Phi}(\boldsymbol{x}(\tau)))\ \nabla_{x_i} V_i(\tau+\Delta \tau, \boldsymbol{x}(\tau+\Delta \tau))\\
& +\frac{1}{\widehat{\alpha}\ N^{\bar{\theta}}}  \sum\limits_{k=1,\ k\neq i}^N \bar{m}(\tau, x_k(\tau), \rho^N_{\Phi}(\boldsymbol{x}(\tau))) \nabla_{x_k} V_i(\tau+\Delta \tau, \boldsymbol{x}(\tau+\Delta \tau)).
\end{align}
\end{subequations}
\begin{small}
\begin{align}\label{newD}
\begin{cases}
&{x}_i(\tau+\Delta \tau)= y_i+ \Delta \tau\ \Bigg[ f(\tau,x_i(\tau), \rho^N_{\Phi}(\boldsymbol{x}(\tau)))+ \widehat{m}(\tau,x_i(\tau), \rho^N_{\Phi}(\boldsymbol{x}(\tau)))\  u_i^*(\tau)  \\
&+ \frac{1}{N^{\bar{\theta}}}\bar{m}(\tau,x_i(\tau), \rho^N_{\Phi}(\tau))\ \sum\limits_{k=1,  k\neq i}^N u_k^*(\tau)\Bigg],\\
&x_i(\tau)=y_i. 
\end{cases}
\end{align}
\end{small}
exist. Additionally, we assume that the assumptions of Corollary \ref{Connect} are satisfied. 
Then the following inequalities 
\begin{align*}
&|\langle V_i(\tau, \boldsymbol{y} )-{W}(\tau, y_i, \rho^{N-1}_{\Phi, i}) \rangle_N|\leq \frac{C}{N^{\bar{\theta}}},\\
&|\langle \nabla_{y_i}V_i(\tau,\boldsymbol{y})-  \nabla_{y_i}{W}(\tau, y_i, \rho^{N-1}_{\Phi, i}) \rangle_N|\leq \frac{C}{N^{\bar{\theta}}},\\
& |\langle x_i(\tau+\Delta \tau)-v_i(\tau+\Delta \tau)\rangle_N| \leq \frac{C}{N^{\bar{\theta}}},
\end{align*}
 hold.
\end{corollary}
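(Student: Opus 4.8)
The strategy is to use the auxiliary function $\bar W$ of Corollary~\ref{Connect} as a bridge. Inside $\langle\cdot\rangle_N$ one has, by the triangle inequality,
\[
|\langle V_i - W\rangle_N|\ \le\ |\langle V_i - \bar W\rangle_N|\ +\ |\langle \bar W - W\rangle_N|,
\]
and the analogous splitting applies to the gradients and to the state increments. The second term on the right, together with the corresponding gradient and trajectory terms, is already bounded by $C/N^{\bar\theta}$ by Corollary~\ref{Connect} (evaluated at $\rho=\rho^{N-1}_{\Phi,i}$, noting $\bar x_i(\tau)=v_i(\tau)=y_i$). Hence everything reduces to comparing the solution $V_i$ of the full $N$-body system \eqref{newV}--\eqref{newD} with the solution $\bar W$ of the reduced system \eqref{eqCon}--\eqref{optu}. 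Rewriting the control \eqref{optu} by means of the chain-rule identity $\partial_3\bar W(\tau+\Delta\tau,\cdot,\cdot)\,\nabla_{x_k}\Phi(x_k)/N=\nabla_{x_k}\bar W(\tau+\Delta\tau,\cdot,\cdot)$ already used in the proof of Corollary~\ref{Connect}, one sees that the two dynamic-programming relations have identical structure, the only discrepancies being (i) the value function itself ($V_i$ versus $\bar W$) in the continuation term and in the control gradients, and (ii) the empirical moment, which enters as $\rho^N_\Phi(\boldsymbol x)$ in \eqref{newV}--\eqref{newD} but as the $(N-1)$-particle moment in \eqref{eqCon}--\eqref{optu}; since $\rho^N_\Phi(\boldsymbol x)=\rho^{N-1}_{\Phi,i}(\boldsymbol x_{-i})+\tfrac1N\big(\Phi(x_i)-\rho^{N-1}_{\Phi,i}\big)$, discrepancy (ii) feeds an error of order $1/N$ through the Lipschitz functions $f,\widehat m,\bar m,l,p$, consistent with the target rate for $\bar\theta=1$ and absorbed for $\bar\theta>1$ since the coupling prefactor $1/N^{\bar\theta}$ is then correspondingly smaller.

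The step I expect to be the main obstacle is the control of the \emph{off-diagonal} derivatives $\nabla_{x_k}V_i$ for $k\neq i$: unlike $\bar W$, the function $V_i(\tau,\boldsymbol y)$ is not assumed to depend on $\boldsymbol y_{-i}$ only through the moment, so a priori the term $\frac{1}{\widehat\alpha N^{\bar\theta}}\sum_{k\neq i}\bar m(\tau,x_k,\rho^N_\Phi)\,\nabla_{x_k}V_i$ in $u_i^*$ is a sum of $N-1$ contributions and need not be of order $1/N^{\bar\theta}$. I would handle this by differentiating the discrete Bellman relation \eqref{newV} with respect to $y_k$ and using the terminal condition $p_i(\boldsymbol x)=p(x_i,\rho^N_\Phi(\boldsymbol x))$, whose derivative in $x_k$ equals $\partial_3 p\,\nabla\Phi(x_k)/N=\mathcal{O}(1/N)$; since $x_k$ enters the continuation value and the running cost only through its own $\mathcal{O}(\Delta\tau)$ increment and through $\rho^N_\Phi$ with weight $1/N$, a short self-consistent estimate (using that $\Delta\tau$ is small) gives $\sup_{k\neq i}|\nabla_{x_k}V_i|\le C/N$ uniformly. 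With this, the off-diagonal part of $u_i^*-\bar u_i^*$ is $\le C/N^{\bar\theta}$.

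It then remains to repeat, with $\bar W$ in the role of $W$, the scheme of the proof of Corollary~\ref{Connect}. First one obtains $|u_i^*-\bar u_i^*|\le \frac{C}{N^{\bar\theta}}+\frac{C}{N^{\bar\theta}}\,|x_i(\tau+\Delta\tau)-\bar x_i(\tau+\Delta\tau)|$ from the gradient estimates and the Lipschitz/boundedness hypotheses. Substituting this into the Euler increments \eqref{newD} and \eqref{eqConDyn} yields the self-bounding inequality $|x_i(\tau+\Delta\tau)-\bar x_i(\tau+\Delta\tau)|\le \frac{C}{N^{\bar\theta}}+\frac{C}{N^{\bar\theta}}\,|x_i(\tau+\Delta\tau)-\bar x_i(\tau+\Delta\tau)|$, hence $|x_i(\tau+\Delta\tau)-\bar x_i(\tau+\Delta\tau)|\le C/N^{\bar\theta}$ for $N$ large. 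Inserting these bounds into the difference of the Bellman equations \eqref{newV} and \eqref{eqCon}, together with the Lipschitz continuity of $l,f,\widehat m,\bar m$ and the hypotheses of Corollary~\ref{Connect}, gives $|\langle V_i-\bar W\rangle_N|\le C/N^{\bar\theta}$; a zeroth-order Taylor expansion of $\nabla_{y}(V_i-\bar W)$ around $\bar x_i(\tau+\Delta\tau)$, exactly as at the end of that proof, gives $|\langle\nabla_{y_i}V_i-\nabla_{y_i}\bar W\rangle_N|\le C/N^{\bar\theta}$, and the trajectory difference was already treated. Combining these with the three estimates of Corollary~\ref{Connect} through the triangle inequality of the first paragraph yields the three claimed bounds.
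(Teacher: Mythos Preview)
Your overall strategy---triangle-inequality splitting through $\bar W$ and then estimating $|V_i-\bar W|$---works harder than necessary and contains a gap. The paper's proof is much shorter: it observes that the scaled system \eqref{newV}--\eqref{newD} is \emph{exactly} the system \eqref{eqCon}--\eqref{optu} once one uses the chain-rule identity you yourself quote, so by the assumed uniqueness one may simply \emph{define} $\langle V_i(\tau,\boldsymbol y)\rangle_N:=\langle\bar W(\tau,y_i,\rho^{N-1}_{\Phi,i})\rangle_N$. With this identification, $\nabla_{y_i}V_i=\nabla_{y_i}\bar W$ and $\nabla_{y_k}V_i=\partial_3\bar W\cdot\nabla_{y_k}\Phi(y_k)/N$ for $k\neq i$ are immediate, $x_i(\tau+\Delta\tau)=\bar x_i(\tau+\Delta\tau)$, and all three claimed estimates follow directly from Corollary~\ref{Connect} with no further comparison needed. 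In your splitting the first term $|\langle V_i-\bar W\rangle_N|$ is not merely small---it is zero.

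The gap in your route is the argument for $\sup_{k\neq i}|\nabla_{x_k}V_i|\le C/N$. You propose to obtain it by differentiating the Bellman relation and invoking the terminal payoff $p_i(\boldsymbol x)=p(x_i,\rho^N_\Phi)$, but the corollary is posed on a single step $[\tau,\tau+\Delta\tau]$, and nothing is assumed about $V_i(\tau+\Delta\tau,\cdot)$ itself---only about $W$ and $\bar W$ at that time. So the bootstrap from $p$ at time $T$ is not available here, and without it you cannot control the off-diagonal sum in $u_i^*$. The paper sidesteps this entirely: the identification with $\bar W$ \emph{gives} $\nabla_{x_k}V_i=\partial_3\bar W\cdot\nabla\Phi(x_k)/N=\mathcal O(1/N)$ for free, which is precisely the structural fact you were trying to derive by hand.
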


\begin{proof}
Thanks to the introduced scaling in \eqref{newV}- \eqref{newD} any solution $\langle \bar{W}(\tau,y_i,\bar{\rho}^{N-1}_{\Phi,i})\rangle_N$ can be identified by the function $\langle V_i\rangle_N$ . We define
$$
\langle V_i(\tau,\boldsymbol{y})\rangle_N := \langle \bar{W}(\tau,y_i,\bar{\rho}^{N-1}_{\Phi})\rangle_N  ,
$$
and immediately obtain that
\begin{align*}
&\langle \nabla_{y_i }V_i(\tau,\boldsymbol{y})\rangle_N =\langle \nabla_{y_i} \bar{W}(\tau,y_i,\bar{\rho}^{N-1}_{\Phi,i}) \rangle_N,\\
& \langle \nabla_{y_k }V_i(\tau,\boldsymbol{y})\rangle_N = \langle \nabla_{y_k} \bar{W}(\tau,y_i,\bar{\rho}^{N-1}_{\Phi}(\boldsymbol{y}_{-i})) \rangle_N= \Big\langle \partial_3  \bar{W}(\tau,y_i,\bar{\rho}^{N-1}_{\Phi,i}) \frac{\nabla_{y_k} \Phi(y_k) }{N} \Big\rangle_N,\ k\neq i
\end{align*}
holds. This identification is only possible because of the new scaling of the system \eqref{ValueSys}. 
For the difference between $\langle V_i \rangle_N$ and $\langle W \rangle_N$ we get: 
\begin{align*}
|\langle V_i(\tau, \boldsymbol{y})-W(\tau, y,  \rho^{N-1}_{\Phi, i})  \rangle_N| \leq &|\langle \bar{W}(\tau, y,  \bar{\rho}^{N-1}_{\Phi,i})-W(\tau, y, \rho^{N-1}_{\Phi, i})\rangle_N|\leq \frac{C}{N^{\bar{\theta}}}.
\end{align*}
Similarly, we obtain the following estimate for the state dynamics:
\begin{align*}
|x_i(\tau+\Delta \tau) - v_i(\tau+\Delta \tau)| =  |\bar{x}_i(\tau+\Delta \tau)- v_i(\tau+\Delta \tau)| \leq \frac{C}{N^{\bar{\theta}}}.
\end{align*}
For the gradient we are able to obtain the same estimate:
\begin{align*}
& |\langle \nabla_{y_i}V_i(\tau, \boldsymbol{y})-  \nabla_{y_i}{W}(\tau, y_i, \rho^{N-1}_{\Phi, i}) \rangle_N|\leq \frac{C}{N^{\bar{\theta}}}.
\end{align*}
\end{proof}
Notice that the value system \eqref{newV} considers the functions  $\boldsymbol{\bar{m}, \bar{\alpha}}$ and not the functions  $\bar{m}$ and $\bar{\alpha}$ 
of the original system \eqref{ValueSys}. 

\begin{remark}
In the case $\bar{\alpha}>1$  the quantity
$$
\frac{\bar{\alpha}}{2\ N^{\bar{a}}} \sum\limits_{k=1,\ k\neq i}^N (p_k^*(\tau))^2 \to 0,
$$
of equation \eqref{scaleA} vanishes as $N\to\infty$. Equivalently the quantity
$$
\frac{\bar{m}}{N^{\bar{\theta}}} \sum\limits_{k=1,\ k\neq i}^N p_k^*(\tau)\to 0,
$$
 of equation \eqref{scaleM} vanishes in the limit $N\to\infty$ provided that $\bar{\theta}>1$ holds. 

\end{remark}
For our further discussion we consider the system \eqref{SymBellman} and study the example $\bar{\alpha}=\bar{\theta}=1$. For other scalings the computations work in a similar way. Results are given in section \ref{SecScaling}. Since
$$
{W}(\tau+\Delta \tau, v_i(\tau+\Delta t),  \rho^{N-1}_{\Phi, i}(\tau+\Delta \tau)) = {W}(\tau, v_i(\tau),  \rho^{N-1}_{\Phi, i}(\tau))+\mathcal{O}(\Delta \tau),
$$
holds the state dynamics for arbitrary initial condition $x_i(\tau)$ are given by
\begin{subequations} \label{disDyn}
\begin{align}
&{z}_i(\tau+\Delta \tau)=y_i+ \Delta \tau \Bigg[  {f}(t,z_i(\tau), \rho^N_{\Phi,i}(\tau))\\
 &\quad\quad+ \widehat{{m}}(t,z_i(\tau),\rho^N_{\Phi,i}(\tau)) \left(\frac{1}{\widehat{\alpha}} \widehat{{m}}(\tau, z_i(\tau),\rho^N_{\Phi,i}(\tau))\ \nabla_{z_i} W(t, z_i(\tau),\rho^{N-1}_{\Phi, i}(\tau))   \right) \\
&\quad\quad + \bar{{m}}(t,z_i(\tau), \rho^N_{\Phi,i}(\tau))\ \frac{1}{N}\sum\limits_{k=1,  k\neq i}^N \frac{1}{\widehat{\alpha}} \widehat{{m}}(\tau, z_k(\tau),\rho^N_{\Phi,i}(\tau))\ \nabla_{z_k} W(t, z_k(\tau), \rho^{N-1}_{\Phi, i}(\tau)))\Bigg] ,\\
&z_i(\tau)=x_i(\tau).
\end{align}
\end{subequations}
 Thus, \eqref{disDyn} can be seen as an explicit Euler discretization of the corresponding time continuous dynamics. In comparison to the state dynamics \eqref{scaleM} we get
$$
z_i(\tau+\Delta \tau) =v_i(\tau+\Delta \tau)+\mathcal{O}((\Delta \tau)^2),
$$
provided that $x_i(\tau)=y$ holds. 
Thus, we can rewrite the discrete dynamics of the value function as 
\begin{footnotesize}
\begin{subequations}\label{FinalBell}
\begin{align}
 \Big\langle{W}(\tau, z_i(\tau),  &\rho^N_{\Phi,i }(\tau)) \Big\rangle_N =  	 - \Delta \tau \Bigg\langle \Bigg( {l}(\tau,z_i(\tau),\rho^N_{\Phi,i}(\tau))\\
& +\frac{\widehat{\alpha}}{2}\ \left( \frac{1}{\widehat{\alpha}} \widehat{{m}}(\tau, z_i(\tau), \rho^N_{\Phi,i}(\tau))\ \nabla_{z_i} {W}(\tau, z_i(\tau),\rho^N_{\Phi,i}(\tau)) \right)^2\\
&+\frac{\bar{\alpha}}{2} \frac{1}{N}\sum\limits_{k=1,\ k\neq i}^N\ \left(\frac{1}{\widehat{\alpha}} \widehat{{m}}(\tau, z_k(\tau),\rho^N_{\Phi,i}(\tau))\ \nabla_{z_k} {W}(\tau, z_k(\tau), \rho^N_{\Phi,i}(\tau)) \right)^2\Bigg) \\
&+{W}(\tau+ \Delta \tau,z_i(\tau+\Delta \tau), \rho^N_{\Phi,i}(\tau+\Delta \tau)) \Bigg\rangle_N . 
\end{align}
\end{subequations}
\end{footnotesize}
which is accurate up to $\mathcal{O}((\Delta \tau)^2)$ in comparison to \eqref{scaleA}. Finally, we are ready to discuss the mean field limit.

\paragraph{Mean Field Limit}

\begin{definition}
Given a vector $\bs{x}:=(x_1(,...,x_N)^{\top}\in \mathbb{R}^{dN},\ x_i:=(x_i^1,...,x_i^d)\in\R^d,\  i=1,...,N$ the empirical measure $\mu^N_{\bs{x}}$ is defined by
\[
\mu^N_{\bs{x}}(x^1,...,x^d):=\frac{1}{N} \sum\limits_{i=1}^N \delta(x^1-x_i^1)\cdot...\cdot \delta(x^d-x_i^d).
\]
\end{definition}
 We have for $\Phi$ as in Definition \ref{EmpDef} and the empirical measure $\mu^N_{\boldsymbol{x}}$
\begin{align*}
\rho^N_{\Phi}(\bs{x})&= \frac1N \sum\limits_{k=1}^N \Phi(x_k)= \int  \Phi(x^1,...,x^d)\ d\mu^N_{\bs{x}}( x^1,...,x^d)=: {\rho}_{\Phi}[\mu^N_{\bs{x}}]
\end{align*}
The computation shows that the empirical moment depends on the empirical measure $\mu^N_{\bs{x}}$. We apply this to functions $f, \widehat{m}, \bar{m}, l, q$, due to assumption \ref{ASym}. Exemplarily, we obtain for $f$
\begin{align*}
&f(\tau,x, \rho^N_{\Phi}(\boldsymbol{x}))=f\left(\tau,x,   {\rho}_{\Phi}[\mu^N_{\bs{x}}]\right). 
\end{align*}
 In the same manner, we rewrite the solution $\langle W \rangle _N$. 
 \begin{align}\label{ValueApp}
  \langle W(\tau,x_i,\rho^{N-1}_{\Phi, i}) \rangle_N= \langle {W}\left( \tau,x_i, {\rho}_{\Phi}[\mu^{N-1}_{\bs{x_{-i}}}]  \right) \rangle_N.
 \end{align}
 If $W$ is Lipschitz with respect to $\rho^{N-1}_{\Phi,i}$ and since
 $$
 ||\rho^{N}_{\Phi}-\rho^{N-1}_{\Phi, i} ||_{L_1(\R^{dN})} = \frac{\Phi(x_i)}{N},
 $$
holds, we have:
$$
\Big|\langle W(\tau,x_i,\rho^{N}_{\Phi})  -W(\tau,x_i,\rho^{N-1}_{\Phi, i})  \rangle_N \Big|  \leq \frac{C}{N}.
$$
Therefore, we replace in the dynamics \eqref{disDyn} $ W(\tau,z_i(\tau),\rho^{N-1}_{\Phi}(\boldsymbol{z}_{-i}(\tau)))$ by $W(\tau,z_i(\tau),\rho^{N}_{\Phi}(\boldsymbol{z}(\tau)))$. 
Then the time continuous limit of \eqref{disDyn} reads
 \begin{small}
 \begin{subequations}\label{MFlow}
 \begin{align}
 &\dot{z}_i(t)={f}(t,z_i(t),\rho^{N}_{\Phi}(t))\\
 &\quad\quad+ \widehat{{m}}(t,z_i(t), \rho^{N}_{\Phi}(t)) \left(\frac{1}{\widehat{\alpha}} \widehat{{m}}(t, z_i(t),\rho^{N}_{\Phi}(t))\ \nabla_{z_i} {W}(t, z_i(t), \rho^{N}_{\Phi}(t))   \right) \\
&\quad\quad + \bar{{m}}(t,z_i(t),\rho^{N}_{\Phi}(t))\ \frac{1}{N}\sum\limits_{k=1,  k\neq i}^N \frac{1}{\widehat{\alpha}} \widehat{{m}}(t, z_k(t),\rho^{N}_{\Phi}(t))\ \nabla_{z_k} {W}(t, z_k(t), \rho^{N}_{\Phi}(t)),\\
&z_i(\tau)=x_i(\tau),
 \end{align}
 \end{subequations}
 \end{small}
on $\tau\leq t \leq \tau+ \Delta \tau $ for arbitrary initial conditions $x_i(\tau)$. 
The corresponding discretized dynamic of the value function becomes
\begin{footnotesize}
\begin{subequations}
\begin{align}
\Big\langle {W}(\tau, z_i(\tau),  &\rho^{N}_{\Phi}(\tau)) \Big\rangle_N=  	 - \Delta \tau \Bigg\langle \Bigg( {l}(\tau,z_i(\tau), \rho^{N}_{\Phi}(\tau))\\
& +\frac{\widehat{\alpha}}{2}\ \left( \frac{1}{\widehat{\alpha}} \widehat{{m}}(\tau, z_i, \rho^{N}_{\Phi}(\tau))\ \nabla_{z_i} {W}(\tau, z_i(\tau),\rho^{N}_{\Phi}(\tau)) \right)^2\\
&+\frac{\bar{\alpha}}{2} \frac{1}{N}\sum\limits_{k=1,\ k\neq i}^N\ \left(\frac{1}{\widehat{\alpha}} \widehat{{m}}(\tau, z_k,\rho^{N}_{\Phi}(\tau))\ \nabla_{z_k} {W}(\tau, z_k(\tau), \rho^{N}_{\Phi}(\tau)) \right)^2\Bigg) \\
&+{W}(\tau+ \Delta \tau,z_i(\tau+\Delta \tau),\rho^{N}_{\Phi}(\tau+\Delta \tau)) \Bigg\rangle_N. 
\end{align}
\end{subequations}
\end{footnotesize}
We introduce the function $h: [0,T]\times \R^d \to \R$ defined by 
$$
h(\tau, z_i(\tau)):= {W}(\tau, z_i(\tau),  \rho_{\Phi}[\mu^N_{\boldsymbol{z}}](\tau)),
$$
for the empirical measure $\mu^N_{\boldsymbol{z}}(\tau)$ on $\R^d$.

\paragraph{Transport Equation}
Let $\phi(z), z\in \R^d$ be a test function.
\begin{align*}
\frac{d}{dt} &\int\limits_{\R} \phi(z)\ \ d\mu^{N}_{\boldsymbol{z}}(t,z) = \frac{1}{N} \sum\limits_{i=1}^N \nabla_z \phi(z)\ \Bigg(  {f}(t,z_i(t), \rho^{N}_{\Phi}(t))\\
&+ \widehat{{m}}(t,z_i(t), \rho^{N}_{\Phi}(t)) \left(\frac{1}{\widehat{\alpha}} \widehat{{m}}(t, z_i(t),  \rho^{N}_{\Phi}(t))\ \nabla_{z_i} h(t,z_i(t))   \right) \\
& + \bar{{m}}(t,z_i(t), \rho^{N}_{\Phi}(t))\ \frac{1}{N}\sum\limits_{k=1,  k\neq i}^N \frac{1}{\widehat{\alpha}} \widehat{{m}}(t, z_k(t),  \rho^{N}_{\Phi})\ \nabla_{z_k} h(t, z_k(t))   \Bigg) \\
\stackrel{\eqref{approxEmp}}{=}& \int\limits_{\R}  \nabla_z \phi(z)\ \Bigg(  {f}(t,z, \rho^{N}_{\Phi}(t))+ \widehat{{m}}(t,z,  \rho^{N}_{\Phi}(t)) \left(\frac{1}{\widehat{\alpha}} \widehat{{m}}(t, z,  \rho^{N}_{\Phi}(t))\ \nabla_{z} h(t, z)   \right) \\
& + \bar{{m}}(t,z, \rho^{N}_{\Phi}(t))\  \int\limits_{\R} \frac{1}{\widehat{\alpha}} \widehat{{m}}(t, z^{\prime},  \rho^{N}_{\Phi}(t))\ \nabla_{z^{\prime}} h(t, z^{\prime})\  d\mu^N_{\boldsymbol{z}}(t,z^{\prime})\\
&- \frac1N  \bar{{m}}(t,z, \rho^{N}_{\Phi}(t))\  \int\limits_{\R} \frac{1}{\widehat{\alpha}} \widehat{{m}}(t, \tilde{z},  \rho^{N}_{\Phi}(t))\ \nabla_{z^{\prime}} h(t, \tilde{z})\  d\mu^1_{z_i}      \Bigg)\ d\mu^N_{\boldsymbol{z}}(t,z)\  
 \end{align*}
 Here, we have used that
  \begin{subequations}\label{approxEmp}
 \begin{align}
\frac{1}{N}\sum\limits_{k=1 }^N \frac{1}{\widehat{\alpha}} \widehat{{m}}&(t, z_k(t),  \rho^{N}_{\Phi}(t))\ \nabla_{z_k} h(t, z_k(t))\\
&-  \frac{1}{N}\sum\limits_{k=1,  k\neq i}^N \frac{1}{\widehat{\alpha}} \widehat{{m}}(t, z_k(t), \rho^{N}_{\Phi}(t))\ \nabla_{z_k} h(t, z_k(t))\\
 &= \frac{1}{N}  \frac{1}{\widehat{\alpha}} \widehat{{m}}(t, z_i(t), \rho^{N}_{\Phi}(t))\ \nabla_{z_i}h(t, z_i(t)),
 \end{align}
 \end{subequations}

\paragraph{HJB Equation} In the following we derive the corresponding HJB equation of equation \eqref{FinalBell}.
As before we employ the empirical measure to rewrite averages as integrals. Thus, we obtain:
\begin{subequations}\label{closeTo}
\begin{footnotesize}
\begin{align}
& \int\limits_{\R^d} h(\tau, \tilde{z})\ d\mu^N_{\boldsymbol{z}}(\tau,\tilde{z})-  \int\limits_{\R^d} h(\tau+ \Delta \tau,\tilde{z})  \ d\mu^N_{\boldsymbol{z}}(\tau+\Delta \tau,\tilde{z})\\
&\stackrel{\eqref{secondApro}}{=}- \Delta \tau   \int\limits_{\R^d}   {l}(\tau,\tilde{z},  \rho^{N}_{\Phi}(\tau))+ \frac{\widehat{\alpha}}{2}\ \left( \frac{1}{\widehat{\alpha}} \widehat{{m}}(\tau,\tilde{z}, \rho^{N}_{\Phi}(\tau))\ \nabla_{z}h(\tau,\tilde{ z}) \right)^2 d\mu^N_{\bs{z}}(\tau, \tilde{z}) \\
&\quad\quad- \Delta \tau \frac{\bar{\alpha}}{2} \int\limits_{\R^d} \int\limits_{\R^d} \left(\frac{1}{\widehat{\alpha}} \widehat{{m}}(\tau, z^{\prime}, \rho^{N}_{\Phi}(\tau))\ \nabla_{z^{\prime}} h(\tau, z^{\prime}) \right)^2\   d\mu^{N}_{\boldsymbol{z}}(\tau,z^{\prime})\ d\mu^N_{\boldsymbol{z}}(\tau,\tilde{z})\\
&\quad\quad+ \Delta \tau \frac{1}{N} \frac{\bar{\alpha}}{2} \int\limits_{\R^d} \int\limits_{\R^d} \left(\frac{1}{\widehat{\alpha}} \widehat{{m}}(\tau, z^{\prime}, \rho^{N}_{\Phi}(\tau))\ \nabla_{z^{\prime}} h(\tau, z^{\prime}) \right)^2\  d\mu^1_{z_i}  \ d\mu^N_{\boldsymbol{z}}(\tau, \tilde{z}). 
\end{align}
\end{footnotesize}
\end{subequations}
Here, we have additionally used:
\begin{subequations}\label{secondApro}
\begin{align}
\frac{\bar{\alpha}}{2} \frac{1}{N}\sum\limits_{k=1}^N\ & \left(\frac{1}{\widehat{\alpha}} \widehat{{m}}(\tau, z_k,  \rho^{N}_{\Phi}(\tau))\ \nabla_{z_k} h(\tau, z_k(\tau)) \right)^2\\
&-  \frac{\bar{\alpha}}{2} \frac{1}{N}\sum\limits_{k=1,\ k\neq i}^N\ \left(\frac{1}{\widehat{\alpha}} \widehat{{m}}(\tau, z_k,  \rho^{N}_{\Phi}(\tau))\ \nabla_{z_k}h(\tau, z_k(\tau)) \right)^2\\
&= \frac{1}{N}  \frac{\bar{\alpha}}{2}  \left(\frac{1}{\widehat{\alpha}} \widehat{{m}}(\tau, z_i, \rho^{N}_{\Phi}(\tau))\ \nabla_{z_i} h(\tau, z_i(\tau)) \right)^2
\end{align}
\end{subequations}
As next step we divide by $-\Delta \tau$ and consider the limit $\Delta \tau\to 0$. Thus, we formally obtain a temporal derivative on the left hand side of \eqref{closeTo}. 
\begin{subequations}\label{FinalHJB}
\begin{footnotesize}
\begin{align}
& \int\limits_{\R^d} \frac{d}{d\tau}  h(\tau, \tilde{z})\ d\mu^N_{\boldsymbol{z}}(\tau,\tilde{z})=\\
&  \int\limits_{\R^d}   \Bigg( {l}(\tau,\tilde{z}, \rho^{N}_{\Phi}(\tau))+ \frac{\widehat{\alpha}}{2}\ \left( \frac{1}{\widehat{\alpha}} \widehat{{m}}(\tau, \tilde{z},  \rho^{N}_{\Phi}(\tau))\ \nabla_{\tilde{z}}h(\tau, \tilde{z}) \right)^2\\
&\quad\quad+\frac{\bar{\alpha}}{2} \int\limits_{\R^d} \left(\frac{1}{\widehat{\alpha}} \widehat{{m}}(\tau, z^{\prime},  \rho^{N}_{\Phi}(\tau))\ \nabla_{z^{\prime}} h(\tau, z^{\prime}) \right)^2   d\mu^{N}_{\boldsymbol{z}}(\tau, z^{\prime})\Bigg) \ d\mu^N_{\boldsymbol{z}}(\tau,\tilde{z})\\
&\quad\quad- \frac{1}{N} \frac{\bar{\alpha}}{2} \int\limits_{\R^d}  \int\limits_{\R^d} \left(\frac{1}{\widehat{\alpha}} \widehat{{m}}(\tau, z^{\prime}, \rho^{N}_{\Phi}(\tau))\ \nabla_{z^{\prime}} h(\tau, z^{\prime}) \right)^2  \ d\mu^1_{z_i}  \ d\mu^N_{\boldsymbol{z}}(\tau,\tilde{z}).
\end{align}
\end{footnotesize}
\end{subequations}
Note that the following inequalities
 \begin{footnotesize}
 \begin{align*}
&\Big|  \frac{1}{N} \int\limits_{\R} \left(\frac{1}{\widehat{\alpha}} \widehat{{m}}(\tau, z^{\prime}, \mu^N_{\boldsymbol{z}}(\tau))\ \nabla_{z^{\prime}} {W}(\tau, z^{\prime}, \rho^N_{\Phi}[\mu^{N}_{\boldsymbol{z}}](\tau)) \right)^2 \ d\mu^1_{z_i}\Big|\leq \frac{C}{N},\\
&\Big|  \frac{1}{N} \int\limits_{\R} \frac{1}{\widehat{\alpha}} \widehat{{m}}(\tau, z^{\prime}, \mu^N_{\boldsymbol{z}}(\tau))\ \nabla_{z^{\prime}} {W}(\tau, z^{\prime}, \rho^N_{\Phi}[\mu^{N}_{\boldsymbol{z}}](\tau))   \ d\mu^1_{z_i}  \Big|\leq  \frac{C}{N},
 \end{align*}
 \end{footnotesize}
hold, since 
$$
|\nabla_{z^{\prime}}  (\tau, z^{\prime}, \rho^N_{\Phi}[\mu^{N}_{\boldsymbol{z}}](\tau))  | \leq C
$$
for all $\tau>0, z^{\prime}\in\R^d,\  \rho^N_{\Phi}\in\R$. Thus, both terms are of order $\mathcal{O}(\frac1N)$ and we can neglect them in the limit. Thus, the HJB and transport equation are given up to order  $\mathcal{O}(\frac1N)$ by:

 \begin{subequations}\label{weakTrans}
\begin{align}
\frac{d}{dt} &\int\limits_{\R} \phi(z)\ \ d\mu^{N}_{\boldsymbol{z}}(t,z) =\\
&  \int\limits_{\R}  \nabla_z \phi(z)\ \Bigg(  {f}(t,z, \rho^{N}_{\Phi}(t))+ \widehat{{m}}(t,z,  \rho^{N}_{\Phi}(t)) \left(\frac{1}{\widehat{\alpha}} \widehat{{m}}(t, z,  \rho^{N}_{\Phi}(t))\ \nabla_{z} h(t, z)   \right) \\
& \quad+ \bar{{m}}(t,z, \rho^{N}_{\Phi}(t))\  \int\limits_{\R} \frac{1}{\widehat{\alpha}} \widehat{{m}}(t, z^{\prime},  \rho^{N}_{\Phi}(t))\ \nabla_{z^{\prime}} h(t, z^{\prime})\  d\mu^N_{\boldsymbol{z}}(t,z^{\prime})       \Bigg)\ d\mu^N_{\boldsymbol{z}}(t,z).
\end{align}
\end{subequations}
Thus, equation \eqref{weakTrans} is the weak form of the following transport equation:
\begin{subequations}\label{MFTransport}
\begin{align}
\partial_t & \mu^{N}_{\boldsymbol{z}}(t,z) + div _z \Bigg[ \Bigg(  {f}(t,z,\rho^{N}_{\Phi}(t) )+ \widehat{{m}}(t,z,\rho^{N}_{\Phi}(t)) \left(\frac{1}{\widehat{\alpha}} \widehat{{m}}(t, z, \rho^{N}_{\Phi}(t))\ \nabla_{z} h(t, z)   \right) \\
&+ \bar{{m}}(t,z, \rho^{N}_{\Phi}(t))\  \int\limits_{\R} \frac{1}{\widehat{\alpha}} \widehat{{m}}(t, z^{\prime},\rho^{N}_{\Phi}(t))\ \nabla_{z^{\prime}} h(t, z^{\prime})\  \mu^N_{\boldsymbol{z}}(t,z^{\prime})    dz^{\prime} \Bigg)\mu^{N}_{\boldsymbol{z}}(t,z)\Bigg] = 0.
\end{align}
\end{subequations}
Equivalently,  the HJB equation \eqref{FinalHJB} up to order $\mathcal{O}(\frac1N)$ reads

\begin{subequations}
\begin{footnotesize}
\begin{align}
  \frac{d}{d\tau} \int\limits_{\R^d} & h(\tau, \tilde{z})\ d\mu^N_{\boldsymbol{z}}(\tau,\tilde{z})=
 \int\limits_{\R^d}   \Bigg( {l}(\tau,\tilde{z}, \rho^{N}_{\Phi}(\tau))+ \frac{\widehat{\alpha}}{2}\ \left( \frac{1}{\widehat{\alpha}} \widehat{{m}}(\tau, \tilde{z},  \rho^{N}_{\Phi}(\tau))\ \nabla_{\tilde{z}}h(\tau, \tilde{z}) \right)^2\\
&+\frac{\bar{\alpha}}{2} \int\limits_{\R^d} \left(\frac{1}{\widehat{\alpha}} \widehat{{m}}(\tau, z^{\prime},  \rho^{N}_{\Phi}(\tau))\ \nabla_{z^{\prime}} h(\tau, z^{\prime}) \right)^2   d\mu^{N}_{\boldsymbol{z}}(\tau, z^{\prime}) \Bigg) \ d\mu^N_{\boldsymbol{z}}(\tau,\tilde{z}).
\end{align}
\end{footnotesize}
\end{subequations}
This is equivalent to 
\begin{align*}
 \int\limits_{\R}&  \partial_{\tau } h(\tau,z)\  d\mu^N_{\boldsymbol{z}}(\tau) + \int\limits_{\R}  h(\tau,z)\ \partial_{\tau} d\mu^N_{\boldsymbol{z}}(\tau,z)\\
 &=  \int\limits_{\R}   \Bigg( {l}(\tau,z, \rho^{N}_{\Phi} )+ \frac{\widehat{\alpha}}{2}\ \left( \frac{1}{\widehat{\alpha}} \widehat{{m}}(\tau, z,\rho^{N}_{\Phi})\ \nabla_{z} h(\tau,z)\right)^2  \\
 &\quad \quad + \frac{\bar{\alpha}}{2} \int\limits_{\R} \left(\frac{1}{\widehat{\alpha}} \widehat{{m}}(\tau, z^{\prime},\rho^{N}_{\Phi})\ \nabla_{z^{\prime}} h(\tau,z^{\prime})) \right)^2  d\mu^N_{\boldsymbol{z}}(t,z^{\prime}) \Bigg) \ d\mu^N_{\boldsymbol{z}}(\tau,z). 
\end{align*}
Then we use the transport equation \eqref{MFTransport}  and get
\begin{small}
\begin{align*}
\int\limits_{\R}  &\  \partial_{\tau } h(\tau,z)\ d\mu^N_{\boldsymbol{z}}(\tau,z)-  \int\limits_{\R} h(\tau,z)\ div_{z} \Big(\Big[{f}(\tau,z,\rho^{N}_{\Phi})+\frac{1}{\widehat{\alpha}}(\widehat{{m}}(\tau,z,\rho^{N}_{\Phi}))^2\ \nabla_z h(\tau,z)\\
&+ \frac{1}{\widehat{\alpha}} \bar{{m}}(\tau,z,\rho^{N}_{\Phi})\ \int\  \nabla_{z^{\prime}}  h(\tau,z^{\prime})\ \widehat{{m}}(\tau,z^{\prime},\rho^{N}_{\Phi})\ d \mu^N_{\boldsymbol{x}}(\tau,z^{\prime})\Big]\ \mu^N_{\boldsymbol{z}}(\tau,z) \Big)  dz\\
 &=  \int\limits_{\R}   \Bigg( {l}(\tau,z, \rho^{N}_{\Phi})+ \frac{\widehat{\alpha}}{2}\ \left( \frac{1}{\widehat{\alpha}} \widehat{{m}}(\tau, z,\rho^{N}_{\Phi})\ \nabla_{z} h(\tau,z)\right)^2\\
 &\quad\quad \quad +\frac{\bar{\alpha}}{2} \int\limits_{\R} \left(\frac{1}{\widehat{\alpha}} \widehat{{m}}(\tau, z^{\prime},\rho^{N}_{\Phi})\ \nabla_{z^{\prime}} h(\tau,z^{\prime})) \right)^2  d\mu^N_{\boldsymbol{z}}(t,z^{\prime}) \Bigg) \ d\mu^N_{\boldsymbol{z}}(\tau,z).
\end{align*}
\end{small}
Integration by parts gives then the final HJB equation 
\begin{small}
\begin{align*}
\int\limits_{\R}  &\Bigg(   \partial_{\tau } h(\tau,z)+  \nabla_z h(\tau,z)\ \Big[{f}(\tau,z,\rho^{N}_{\Phi})+\frac{1}{\widehat{\alpha}}(\widehat{{m}}(\tau,z,\rho^{N}_{\Phi}))^2\ \nabla_z h(\tau,z)\\
&+ \frac{1}{\widehat{\alpha}} \bar{{m}}(\tau,z,g)\ \int \nabla_{z^{\prime}}  h(\tau,z^{\prime})\ \widehat{{m}}(\tau,z^{\prime},\rho^{N}_{\Phi})\ d \mu^N_{\boldsymbol{z}}(\tau,z^{\prime})\Big] \Bigg)\  d\mu^N_{\boldsymbol{z}}(\tau,z)\\
 &=  \int\limits_{\R}   \Bigg( {l}(\tau,z, \rho^{N}_{\Phi})+ \frac{\widehat{\alpha}}{2}\ \left( \frac{1}{\widehat{\alpha}} \widehat{{m}}(\tau, z,\rho^{N}_{\Phi})\ \nabla_{z} h(\tau,z)\right)^2\\
 &\quad\quad \quad +\frac{\bar{\alpha}}{2} \int\limits_{\R} \left(\frac{1}{\widehat{\alpha}} \widehat{{m}}(\tau, z^{\prime},\rho^{N}_{\Phi})\ \nabla_{z^{\prime}} h(\tau,z^{\prime})) \right)^2 \ d\mu^N_{\boldsymbol{z}}(t,z^{\prime}) \Bigg) \ d\mu^N_{\boldsymbol{z}}(\tau,z).
\end{align*}
\end{small}
tested against the empirical measure  $\mu^N_{\boldsymbol{z}}\geq 0$. \\
We assume that in the limit $N\to\infty$ the empirical measure $\mu^N_{\boldsymbol{z}}$ converges to a measure with probability distribution function $g: [0,T]\times\R^d\to \R$. 
Thus, the mean field game limit equations are given on the support of $g$ and for all $t\in[0,T], x\in\R^d$ by:
 \begin{footnotesize}
 \begin{align*}
&\partial_t h(t,x)+\nabla_{x} h(t, x)\  \left( {f}(t,x,\rho_{\Phi}[g])+\frac{1}{\widehat{\alpha}} \bar{{m}}(t,x,\rho_{\Phi}[g])\  \int g(t,z)\ \widehat{{m}}(t,z,\rho_{\Phi}[g])\  \nabla_{z} h(t, z) \ dz \right)\\
 &\quad= {l}(t,x,\rho_{\Phi}[g]) -\frac{1}{2\ \widehat{\alpha}} \Big[\nabla_{x} h(t,x)\ \widehat{{m}}(t,x,\rho_{\Phi}[g])\Big]^2 + \frac{\bar{\alpha}}{2\ \widehat{\alpha}^2}\ \int g(t,z)\ \big(\nabla_{z} h(t,z)\ \widehat{{m}}(t,z,\rho_{\Phi}[g])\big)^2\ dz,\\
 &\partial_t g(t,x)+ div_{x} \Big(\Big[{f}(t,x,g)+\frac{1}{\widehat{\alpha}}(\widehat{{m}}(t,x,\rho_{\Phi}[g]))^2\ \nabla_x h(t,x)\\
 &\quad\quad\quad\quad \quad\quad + \frac{1}{\widehat{\alpha}} \bar{{m}}(t,x,\rho_{\Phi}[g]\ \int g(t,z)\  \nabla_{z}  h(t,z)\ \widehat{{m}}(t,z,\rho_{\Phi}[g])\ dz\Big]\ g(t,x) \Big)=0,\\
 &g(0,x)=g_0,\quad h(T,x)={p}(x,\rho_{\Phi}[g](T)). 
\end{align*} 
\end{footnotesize}
Here, we have replaced the dependence on the empirical moment $\rho^N_{\Phi}$ by the corresponding moment of the probability distribution function $g$ given by $ \rho_{\Phi}[g](t):= \int \Phi(x)\ g(t,x)\ dx$. 
The previous MFG limit system corresponds to the choice of the \textbf{non-local-coupling}. The limit equations of the other couplings are easily obtained by different scalings with respect to the number of agents. 

\subsection{ Scalings for $\widehat{m}$ and $\widehat{\alpha}$}\label{SecScaling}
In the previous section we have seen that the scaling of the quantities $\boldsymbol{\bar{\alpha}}$ and $\boldsymbol{\bar{m}}$ have to satisfy
$$\boldsymbol{\bar{\alpha}}\sim \mathcal{O}\left(\frac{1}{N^{\bar{a}}}\right), \bar{a}\geq 1\quad \boldsymbol{\bar{m}}\sim \mathcal{O}\left(\frac{1}{N^{\bar{\theta}}}\right),\ \bar{\theta}\geq 1.$$\\
This choice was necessary in order to obtain a closed equation for $W$. Therefore, we aim to investigate alternative scalings for the functions $\widehat{m}$ and weight $\widehat{\alpha}$.
We assume the following scaling of value function $W$
\begin{align}
\nabla_{y_i} W\Big(t,y_i,\rho^{N-1}_{\Phi}( \boldsymbol{y}_{-i})\Big)\sim\mathcal{O}(1).\label{Ass1}
\end{align}
 For the derivative with respect to a symmetric variable $j,\ j\neq i$ we get:
\begin{align}
\nabla_{y_j} W\Big(t,y_i, \rho^{N-1}_{\Phi}( \boldsymbol{y}_{-i})\Big)\sim\mathcal{O}\left(\frac{1}{N}\right).\label{Ass2}
\end{align}

\begin{lemma}\label{ScaleTheo}
We assume that  
$$
\boldsymbol{\bar{\alpha}}\sim \mathcal{O}(\frac{1}{N^{\bar{a}}}),\ \bar{\alpha}\geq 1\quad \boldsymbol{\bar{m}}\sim \mathcal{O}(\frac{1}{N^{\bar{\theta}}}),\ \bar{\theta}\geq 1,\ f\sim \mathcal{O}(1),\quad l\sim\mathcal{O}(1),\quad p\sim \mathcal{O}(1)
$$
 and equations \eqref{Ass1},\eqref{Ass2} hold. 
We rescale the quantities $\widehat{m}$ and $\widehat{\alpha}$ with respect to the number of agents $N$; we  consider  
\begin{align*}
&\boldsymbol{\widehat{m}} := \frac{1}{N^{\widehat{\theta}}} \widehat{m },\quad \boldsymbol{\widehat{\alpha}}:=\frac{1}{N^{\widehat{a}}} \widehat{\alpha},
\end{align*}
for $\widehat{\theta},\widehat{a}\geq 0$. Then the set of all possible scalings are an interplay of the scaling parameters $\widehat{\theta}, \bar{\theta},\widehat{a}, \bar{a} $ defined by the following inequalities.
\begin{align*}
&i)\ \quad \widehat{a}-2\ \widehat{\theta} \leq  0,\\
&ii)\ \quad  \widehat{a}+1-\widehat{\theta}-\bar{\theta} \leq  0,\\
&iii)\ \quad 2\ \widehat{a}+1-2\ \widehat{\theta}-\bar{a} \leq 0.
\end{align*}

\end{lemma}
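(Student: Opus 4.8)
The plan is to re-run the chain of reductions of Section~\ref{Main}, now carrying the two extra scalings $\boldsymbol{\widehat{m}}=N^{-\widehat{\theta}}\,\widehat{m}$, $\boldsymbol{\widehat{\alpha}}=N^{-\widehat{a}}\,\widehat{\alpha}$ alongside the already-fixed $\boldsymbol{\bar{m}}=N^{-\bar{\theta}}\,\bar{m}$, $\boldsymbol{\bar{\alpha}}=N^{-\bar{a}}\,\bar{\alpha}$, and to read off at each step the power of $N$ carried by every game-theoretic term. The first step is to size the optimal feedback control. The averaged Bellman system \eqref{SymBellman} already retains only the self-part $p_i^*=\frac{1}{\boldsymbol{\widehat{\alpha}}}\boldsymbol{\widehat{m}}(\tau,v_i,\rho)\,\nabla_{v_i}W$ (the cross-term with $\bar{m}$ is of lower order, since $\nabla_{v_k}W=\mathcal{O}(1/N)$ for $k\neq i$ by \eqref{Ass2}), so with the ansatz \eqref{Ass1} on $\nabla_{v_i}W$ one obtains $p_i^*\sim\mathcal{O}(N^{\widehat{a}-\widehat{\theta}})$. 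This estimate is then substituted into the averaged Bellman system and into the discrete flow \eqref{disDyn}--\eqref{MFlow}.

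The core of the argument is bookkeeping in the three places where products of rescaled quantities appear. In the state dynamics the self-interaction term $\boldsymbol{\widehat{m}}\,p_i^*$ scales like $N^{\widehat{a}-2\widehat{\theta}}$, and the companion running-cost term $\boldsymbol{\widehat{\alpha}}(p_i^*)^2$ carries the same power, so requiring this contribution to remain $\mathcal{O}(1)$ as $N\to\infty$ is exactly inequality (i). The interaction term $\boldsymbol{\bar{m}}\sum_{k\neq i}p_k^*$ in the dynamics gains an extra factor $N$ from the sum over the $N-1$ other agents, hence scales like $N^{1+\widehat{a}-\widehat{\theta}-\bar{\theta}}$, whose boundedness is inequality (ii) — it reappears with the same power as a coefficient of $\nabla_x h$ in the HJB equation. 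Finally the control-cost coupling $\boldsymbol{\bar{\alpha}}\sum_{k\neq i}(p_k^*)^2$ gains a factor $N$ from the sum and $N^{2(\widehat{a}-\widehat{\theta})}$ from the square, giving $N^{2\widehat{a}+1-2\widehat{\theta}-\bar{a}}$, whose boundedness is inequality (iii). All remaining contributions — $f$, $l$, $p$, the empirical moments $\rho^N_\Phi$, $\rho^{N-1}_{\Phi,i}$ and the $\langle\cdot\rangle_N$ averages — are $\mathcal{O}(1)$ by the standing assumptions, and the remainders identified as $\mathcal{O}(1/N)$ in Section~\ref{Main} stay negligible (they are $1/N$ or $1/N^2$ times quantities already controlled by (i)--(iii)); hence (i)--(iii) are precisely the conditions under which every coefficient of the limiting HJB and transport equations \eqref{MFGFirst} remains finite.

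For the converse direction I would check that (i)--(iii) are also sufficient: when one inequality is strict the corresponding term is $o(1)$ and drops out, whereas when it holds with equality the term converges to the $\mathcal{O}(1)$ nonlocal integral written in \eqref{MFGFirst}, the indicator $\chi$ recording which alternative occurs — so the $2^3=8$ combinations of strict-versus-equality yield the eight announced limit systems. One must also reinspect Corollary~\ref{Connect} and the corollary following it: their $\mathcal{O}(N^{-\bar{\theta}})$ estimates were obtained with $\widehat{m},\widehat{\alpha}$ unscaled, but replacing them by $\boldsymbol{\widehat{m}},\boldsymbol{\widehat{\alpha}}$ only alters the constants (with the new powers bounded thanks to (i)--(iii)), so the identifications $V_i\leftrightarrow W$, the empirical-measure reformulation and finally the limit $N\to\infty$ go through as in the main text. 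The main obstacle I anticipate is exactly this bookkeeping: one has to keep straight the $1/N$ coming from $\langle\cdot\rangle_N$, the $1/N$ built into $\rho^N_\Phi$ and $\rho^{N-1}_{\Phi,i}$, and the factor $N$ generated by each $\sum_{k\neq i}$, and verify the self-consistency that the power of $N$ obtained for a term in the dynamics matches the one obtained for its partner in the cost functional — which it must, both descending from the same $p_i^*$ and the same factors $\boldsymbol{\widehat{m}},\boldsymbol{\bar{m}},\boldsymbol{\widehat{\alpha}},\boldsymbol{\bar{\alpha}}$.
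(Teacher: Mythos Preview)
Your proposal is correct and follows essentially the same approach as the paper: identify the four combinations of rescaled quantities appearing in the averaged Bellman system and read off the power of $N$ carried by each, requiring them to be $\mathcal{O}(1)$. Your presentation is marginally cleaner in that you first size $p_i^*\sim\mathcal{O}(N^{\widehat{a}-\widehat{\theta}})$ and then substitute, whereas the paper works directly with the compound expressions $\boldsymbol{\widehat{m}}^2/\boldsymbol{\widehat{\alpha}}$, $\boldsymbol{\widehat{m}}/\boldsymbol{\widehat{\alpha}}\cdot N/N^{\bar{\theta}}$, $\boldsymbol{\widehat{m}}^2/\boldsymbol{\widehat{\alpha}}^2\cdot N/N^{\bar{a}}$; and your discussion of the converse direction and of rechecking Corollary~\ref{Connect} goes a bit beyond what the paper's own proof records, but is consistent with the surrounding text.
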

\begin{proof}
First, we analyze the scales of the quantities $\boldsymbol{f}_i$ and $L_i$ in the system \eqref{SymBellman}.
By assumption  $f\sim \mathcal{O}(1), p \sim \mathcal{O}(1)$ and $l\sim \mathcal{O}(1)$ holds. Thus, the remaining sums of $\boldsymbol{f}_i$ asymptotically satisfy:
\begin{align*}
&i) \quad  \boldsymbol{\widehat{m}} \left(\frac{1}{\boldsymbol{\widehat{\alpha}}} \boldsymbol{\widehat{m}}\ \nabla_{x_i} W(x_i, \rho^{N-1}_{\Phi}(\boldsymbol{x}_{-i}))  \right) \sim \frac{\boldsymbol{\widehat{m}}^2}{\boldsymbol{\widehat{\alpha}}} \ \mathcal{O}(1) ,\\
& ii) \quad \frac{1}{N^{\bar{\theta}}}\bar{m}\ \sum\limits_{k=1,  k\neq i}^N \frac{1}{\boldsymbol{\widehat{\alpha}}} \boldsymbol{\widehat{m}}\ \nabla_{x_k} W(x_k, \rho^{N-1}_{\Phi}(\boldsymbol{x}_{-k})) \sim  \frac{\boldsymbol{\widehat{m}}}{\boldsymbol{\widehat{\alpha}}}\ \mathcal{O}\Big(    \frac{N}{N^{\bar{\theta}}}  \Big).
\end{align*}
Respectively, the remaining terms of $L_i$ satisfy:
\begin{align*}
&iii)\quad\frac{1}{N^{\bar{a}}}  \frac{\bar{\alpha}}{2} \sum\limits_{k=1,\ k\neq i}^N\ \left( \frac{1}{\widehat{\alpha}} \widehat{m}\ \nabla_{x_k} W(x_k, \boldsymbol{x}_{-k}) \right) ^2\sim\ \mathcal{O}\Big( \frac{N}{N^{\bar{a}}} \Big)  \frac{\boldsymbol{\widehat{m}}^2}{\boldsymbol{\widehat{\alpha}}^2}, \\
&iv)\quad  \frac{\boldsymbol{\widehat{\alpha}}}{2}\ \left( \frac{1}{\boldsymbol{\widehat{\alpha}}} \boldsymbol{\widehat{m}}\ \nabla_{x_i} W(x_i, \rho^{N-1}_{\Phi}(\boldsymbol{x}_{-i})) \right)^2 \sim \frac{\boldsymbol{\widehat{m}}^2}{\boldsymbol{\widehat{\alpha}}} \mathcal{O}(1).
\end{align*}
The scales $i)$ and $iv)$ are asymptotically identical. Hence, the relevant quantities read:
\begin{align*}
&i)\ \frac{\boldsymbol{\widehat{m}}^2}{\boldsymbol{\widehat{\alpha}}}  \mathcal{O}(1)\sim \mathcal{O}\Big( \frac{N^{\widehat{a}}}{N^{2\widehat{\theta}}} \Big),\quad \widehat{a}-2\ \widehat{\theta}  \leq 0,\\
&ii)\ \frac{\boldsymbol{\widehat{m}}}{\boldsymbol{\widehat{\alpha}}}\ \mathcal{O}\Big(  \frac{N}{N^{\bar{\theta}}}\Big) \sim   \mathcal{O}\Big( \frac{N^{\widehat{a}+1}}{N^{\widehat{\theta}+\bar{\theta}}}\Big),\quad  \widehat{a}+1-\widehat{\theta}-\bar{\theta} \leq 0,\\
&iii) \frac{\boldsymbol{\widehat{m}}^2}{\boldsymbol{\widehat{\alpha}}^2}  \mathcal{O}\Big( \frac{N}{N^{\bar{a}}} \Big) \sim  \mathcal{O}\Big(  \frac{N^{2\ \widehat{a}+1}}{N^{2\ \bar{\theta}+ \bar{a}}}\Big),\quad 2\ \widehat{a}+1-2\ \bar{\theta}-\bar{a}\leq 0.
\end{align*}
\end{proof}
The inequalities $i)-iii)$ then define the precise form of the mean field game limit equations stated previously \eqref{MFGFirst}.

\begin{remark}
The scaling considered in the previous section in order to derive the mean field limit is given by:
$$
\bar{a}=\bar{\theta}=1,\quad \widehat{a}=\widehat{\theta}=0. 
$$
Clearly, for this choice we have equality in the quantities $i)-iii)$.  
\end{remark}

\section{ Financial Market Model}

In this section, we give an explicit example of a \textbf{non-local-coupling} which has been introduced in \cite{trimborn2017kinetic}. Inspired by the econophysical Levy-Levy-Solomon model \cite{levy2000microscopic}, we consider $N$ financial agents. 
Each agent is equipped with two portfolios, one portfolio represents the investment in a risky stock, the other the investment in safe bonds. The sum of the risky investments $x\in\R_{\geq0}$ and the risk-free investments $y\in\R_{\geq 0}$ is the overall wealth of the i-th agent $w_i:=x_i+y_i,\ i=1,...,N$. The system reads:
\begin{subequations}\label{exampleMicro}
\begin{align}
&\dot{x}_i=\kappa\ \frac{\dot{S}+D}{S}\ x_i+u_i^*,\\
&\dot{y}_i=r\ y_i-u_i^*,\\
&S:= \lambda\ \frac{1}{N} \sum\limits_{k=1}^N x_k,\\
&u_i^*=\argmax\limits_{u_i\in \R} - \int\limits_t^T   l(s,x_i, \rho_{\Phi}^N(\boldsymbol{x}), y_i, \rho_{\Phi}^N(\boldsymbol{y}))+ \frac{\widehat{\alpha}}{2}\ u_i^2\ ds.
\end{align}
\end{subequations}
Here, $\boldsymbol{x}:= (x_1,...,x_N)^{\top}\in\R^N$ and  $r,D>0,\  \lambda,\kappa\in (0,1)$ are positive constants.
We denote by $S\in\R_{\geq 0}$ the stock price where $D$ symbolizes a dividend. Furthermore, is $r$ the interest rate of the safe asset, $\lambda$ the market depth and $\kappa$ expresses transaction costs.
The agents can shift their money between both portfolios by the optimal control $u_i^*$, a positive control corresponds to a shift of the money from the bond portfolio to the stock portfolio. 
We assume quadratic costs and the objective function $l$ is not specified, but is only allowed to depend on $t,x_i,y_i$ and some empirical moment $\rho^N_{\Phi}$. \\[1 em]
Before we can derive the limit system, we need an explicit ODE for the risky portfolio. We define $e:=(1,...,1)^{\top}\in\R^N,\ I:= diag(1)\in\R^{N\times N}$ and rewrite our risky asset equation:
\begin{align*}
\dot{\boldsymbol{x}}&= \diag(\kappa)\ \frac{\frac{\lambda}{N} e^{\top}\dot{\boldsymbol{x}}+D}{\frac{\lambda}{N}  e^{\top} \boldsymbol{x} } \boldsymbol{x}+ \boldsymbol{u}\\
\Big( I-\underbrace{\diag(\kappa) \frac{\boldsymbol{x}\ e^{\top} }{e^{\top} \boldsymbol{x}}}_{=:P(\textbf{x})}\Big) \dot{\boldsymbol{x}}&= \diag{\kappa} \frac{D}{\frac{\lambda}{N} e^{\top}\boldsymbol{x}} \boldsymbol{x}+\boldsymbol{u}
\end{align*}
The matrix $P$ is a rank one matrix and $||P||_1=\kappa<0$ holds. Hence, the inverse of $\Sigma(\boldsymbol{x}):=I-P(\boldsymbol{x})$ exists and has a Neumann series expansion. We get:
\[
\Sigma^{-1}(\boldsymbol{x})=\sum\limits_{k=0}^N P^k(\boldsymbol{x})=I+\sum\limits_{k=0}^N \alpha^k P(\boldsymbol{x})=I+\frac{1}{1-\alpha} P(\boldsymbol{x}),\quad \alpha:=trace(P(\boldsymbol{x}))=\kappa.
\]
Thus, the explicit stock ODE is given by:
\[
\dot{\boldsymbol{x}}= \frac{D}{\frac{\lambda}{N} e^{\top}\boldsymbol{x}}\ \Sigma^{-1}(\boldsymbol{x})\ \diag(\kappa)\ \boldsymbol{x}+ \Sigma^{-1}(\boldsymbol{x})\ \boldsymbol{u}.
\]
For the i-th agent we observe:
\begin{align*}
&\dot{x}_i=\big( c_1+ c_2 \frac{1}{N} \sum\limits_{k=1}^N u_k^*  \big)\ \frac{x_i}{\frac{1}{N}\sum\limits_{k=1}^Nx_k }+u_i^*,\\
&\dot{y}_i = r\ y_i-u_i^*,\\
&u_i^*=\argmax\limits_{u_i\in \R} - \int\limits_t^T   l(s,x_i, \rho_{\Phi}^N(\boldsymbol{x}), y_i, \rho_{\Phi}^N(\boldsymbol{y}))+ \frac{\widehat{\alpha}}{2}\ u_i^2\ ds,
\end{align*}
where the constants $c_1,c_2$ are defined by: $c_1:=(1+\frac{\kappa}{1-\kappa})\ \kappa\ \frac{D}{\lambda},\ c_2:= \frac{\kappa}{1-\kappa}$.
We consider an arbitrary model for $l$, which satisfies the symmetry assumption \ref{ASym}.
This model fits into the previously introduced framework.
\begin{lemma}
 We verify the symmetry assumptions and scaling properties of the microscopic system \eqref{exampleMicro}.  
\begin{align*}
&f(x_i,\rho^N(\boldsymbol{x}),y_i)=\Big( c_1\ \frac{x_i}{\rho^N(\boldsymbol{x}) }, r\ y_i\Big)^{\top}\sim   \mathcal{O}(1),\quad \rho^N(\boldsymbol{x}):=\frac{1}{N} \sum\limits_{k=1}^N  x_k,\\
& M_i^i=\widehat{m}= \left( c_2\ \frac{x_i}{\rho^N(\boldsymbol{x}) } \frac{1}{N}+1, -1\right)^{\top}\sim \mathcal{O}(1),\\                       
 & M^i_{k,k\neq i }=\bar{m}=\left( c_2 \frac{x_i}{\rho^N(\boldsymbol{x}) }\frac{1}{N}, 0\right)^{\top}\sim \mathcal{O}\left(\frac1N\right),\\
 & \bar{\alpha}\equiv 0,\ p\equiv0\quad k,i=1,...,N.
\end{align*}
Then we deduce the limiting system for $N\to\infty$:
\begin{footnotesize}
\begin{subequations} \label{example}
\begin{align}
&\partial_t h(t,x,y) +\partial_x h(t,x,y) \Big( c_1 \frac{x}{\int z_1\ g(t,z_1,z_2)\ dz_1dz_2} \\
&\quad\quad + \frac{c_2\ x}{\widehat{\alpha}\ \int z_1\ g(t,z_1,z_2)\ dz_1dz_2}\ \int g(t,z_1,z_2)\ (\partial_{z_1} h(t,z_1,z_2)-\partial_{z_2} h(t,z_1,z_2))\ dz_1dz_2 \Big)\\
&\quad \quad+ \partial_y h(t,x,y)\  r\ y ={l}(t,x,y,g)- \frac{1}{2\widehat{\alpha}} (\partial_x h(t,x,y)-\partial_y h(t,x,y))^2,\\
&\partial_t g(t,x,y) + \partial_x \Big(\Big[  c_1 \frac{x}{\int z_1\ g(t,z_1,z_2)\ dz_1dz_2}+\frac{1}{\widehat{\alpha}} (\partial_x h(t,x,y)-\partial_y h(t,x,y))\\
&\quad\quad +\frac{c_2\ x }{\widehat{\alpha}\ \int z_1\ g(t,z_1,z_2)\ dydx}\  \int  g(t,z_1,z_2)\  (\partial_{z_1} h(t,z_1,z_2)-\partial_{z_2} h(t,z_1,z_2)) \  dz_1\ dz_2  \Big]  g(t,x,y)\Big) \\
& \quad\quad +\partial_y \left( \left[ r\ y+  \frac{1}{\widehat{\alpha}} (\partial_y h(t,x,y)-\partial_x h(t,x,y))    \right]  g(t,x,y)\right)=0.
\end{align}
\end{subequations}
\end{footnotesize}

\end{lemma}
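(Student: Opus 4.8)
The plan is to recognise the system \eqref{exampleMicro}, after the Neumann-series inversion that produces the explicit stock ODE above, as a member of the abstract class \eqref{MicroMod}--\eqref{runCost}, to check that Assumptions \ref{assI}--\ref{ScaleA} and the scaling inequalities of Lemma \ref{ScaleTheo} hold for it, and then to read \eqref{example} off termwise from the generic limit system \eqref{MFGFirst}.

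First I would fix the attribute space as $\R^2$ with state $(x_i,y_i)$ and moment generator $\Phi(x,y)=x$, so that $\rho^N_\Phi(\boldsymbol{x})=\frac1N\sum_{k}x_k$ is precisely the average $\rho^N(\boldsymbol{x})$ entering the explicit ODE. Grouping the terms multiplying each control,
\[
\dot x_i=c_1\frac{x_i}{\rho^N(\boldsymbol{x})}+\Big(c_2\frac{x_i}{\rho^N(\boldsymbol{x})}\frac1N+1\Big)u_i^*+c_2\frac{x_i}{\rho^N(\boldsymbol{x})}\frac1N\sum_{k\neq i}u_k^*,\qquad \dot y_i=r\,y_i-u_i^*,
\]
identifies the drift $f=(c_1 x/\rho,\,r y)^\top$, the self-coupling $\boldsymbol{\widehat m}=\widehat m=(c_2 x/(N\rho)+1,\,-1)^\top$ (so $\widehat\theta=0$), and, since $u_i^*$ enters $\dot x_k$ through $c_2\frac{x_k}{\rho^N}\frac1N u_i^*$, the cross-coupling block $\boldsymbol{M}^i_k=(c_2 x_k/(N\rho),\,0)^\top$, i.e. $\boldsymbol{\bar m}=\frac1N\bar m$ with $\bar m=(c_2 x/\rho,\,0)^\top$ and $\bar\theta=1$. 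From the running cost $\int_t^T l(s,x_i,\rho^N(\boldsymbol{x}),y_i)+\frac{\widehat\alpha}{2}u_i^2\,ds$ one reads $l_i=l$, $\bar\alpha\equiv 0$, $\widehat a=0$ and $p\equiv 0$, matching Assumptions \ref{cost} and \ref{ASym}; the $C^1$ and Lipschitz hypotheses of Assumption \ref{ASym} hold on any set on which $\rho^N(\boldsymbol{x})$ stays bounded away from $0$, which is exactly the region where the limit equations \eqref{MFGFirst} are asserted (the support of $g$).

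Next I would verify the scaling inequalities of Lemma \ref{ScaleTheo} with $\widehat a=\widehat\theta=0$, $\bar\theta=1$ and $\bar a\geq 1$ arbitrary (harmless because $\bar\alpha=0$): inequality $i)$ is $\widehat a-2\widehat\theta=0\leq 0$, inequality $ii)$ is $\widehat a+1-\widehat\theta-\bar\theta=0\leq 0$, inequality $iii)$ is $2\widehat a+1-2\widehat\theta-\bar a=1-\bar a\leq 0$; all three hold, with equality in $i)$ and $ii)$, so we are in the non-local-coupling regime with the indicator terms attached to $i)$ and $ii)$ active and the indicator term attached to $iii)$ carrying the vanishing factor $\bar\alpha$. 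Substituting into \eqref{MFGFirst} and passing $N\to\infty$ sends $\rho^N\to\rho_\Phi[g]=\int z_1\,g(t,z_1,z_2)\,dz_1dz_2$ and kills the $\mathcal{O}(1/N)$ correction in $\widehat m$, so $\widehat m\to(1,-1)^\top$, hence $\widehat m\cdot\nabla h=\partial_{z_1}h-\partial_{z_2}h$, while $\bar m\to(c_2 x/\rho_\Phi[g],\,0)^\top$ and $f=(c_1 x/\rho_\Phi[g],\,r y)^\top$; with $\bar\alpha=0$ the generic HJB and transport equations then collapse termwise to \eqref{example} (the quadratic penalty becomes $-\frac{1}{2\widehat\alpha}(\partial_x h-\partial_y h)^2$, the self-drift becomes $\frac1{\widehat\alpha}(\partial_x h-\partial_y h)$ in the $x$-flux and its negative in the $y$-flux, and the non-local term contributes only to the $x$-component since the second entry of $\bar m$ vanishes).

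I expect the main obstacle to be the regularity step: $f,\widehat m,\bar m$ all carry the moment $\rho^N(\boldsymbol{x})=\frac1N\sum_k x_k$ in a denominator and are genuinely singular as it tends to $0$, so Assumption \ref{ASym} is only available after restricting wealth configurations to $\{\rho^N\geq\delta>0\}$, and one has to argue that this restriction is preserved by the limiting flow \eqref{MFlow}; a secondary technicality is that the abstract framework uses a single scalar generator $\Phi$, so if $l$ is additionally allowed to depend on $\rho^N(\boldsymbol{y})$ one must either enlarge $\Phi$ to a vector-valued moment or simply note that the derivation of \eqref{MFGFirst} is insensitive to the number of moments entering $l$.
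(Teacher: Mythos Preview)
Your proposal is correct and follows exactly the route the paper takes: the paper does not give a separate proof for this lemma beyond the term-by-term identification of $f,\widehat m,\bar m,\bar\alpha,p$ already listed in the statement (using the explicit ODE obtained from the Neumann-series inversion), followed by direct substitution into the generic limit system \eqref{MFGFirst} with the scaling parameters $\widehat a=\widehat\theta=0$, $\bar\theta=\bar a=1$ and $\bar\alpha=0$. Your additional remarks on the singularity of $f,\widehat m,\bar m$ at $\rho^N(\boldsymbol{x})=0$ and on the possible need for a vector-valued moment in $l$ are sound observations about the formal nature of the derivation, but they go beyond what the paper itself addresses.
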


\paragraph{Simplified Model}
We simplify the previous model by considering only the risky portfolio. This is realistic if the prime rate of a national bank is zero. 
We set $y\equiv 0$ in equation \eqref{example} and assume that $h=h(t,x)$ holds.  Then the limit system is given by:

\begin{align*}
&\partial_t h(t,x) +\partial_x h(t,x) \left( c_1 \frac{x}{\int z\ g(t,z)\ dz} +\frac{c_2}{\widehat{\alpha}}  \frac{x}{\int z\ g(t,z)\ dz}\ \int g(t,z)\ \partial_z h(t,z)\ dz \right)\\
&\quad\quad={l}(t,x,g)- \frac{1}{2\widehat{\alpha}} (\partial_x h(t,x))^2,\\
&\partial_t g(t,x) + \partial_x \Big(\Big[  c_1 \frac{x}{\int z\ g(t,z)\ dz}+\frac{1}{\widehat{\alpha}} \partial_x h(t,x)\\
& \quad\quad\quad\quad \quad\quad  + \frac{c_2}{\widehat{\alpha}} \frac{x}{\int z\ g(t,z)\ dz}\ \int g(t,z)\ \partial_z h(t,z)\ dz ] \Big]  g(t,x)\Big) =0.
\end{align*}

\paragraph{Discussion of financial market model}
This microscopic financial market model is regarded as the rational version of  kinetic portfolio optimization model introduced in \cite{Trimborn2019, trimborn2017portfolio}. In this context, rational means that the financial agents solve their optimization problem exactly. The model introduced in \cite{Trimborn2019, trimborn2017portfolio} considers boundedly rational agents in the sense of Simon \cite{simon1982models}. Mathematically, the investors simplify their optimization problem using model predictive control. 
The authors prove that the model can generate well known features of financial markets such as booms and crashes and fat-tails in asset returns. In economic research, there is an ongoing discussion if these phenomena, called stylized facts have their origin in the irrational behavior of market participants.
A detailed analysis of the introduced rational portfolio model might help to answer this question. The discussion of this highly non-linear model is left open for further research.

\section{Conclusion}
We derived the MFG limit system of microscopic dynamics and shown that it is possible to apply MFG theory to an important class of microscopic differential games. In addition, we have seen that the symmetry and the scaling behavior of the microscopic model is crucial in order to obtain the limit.
It is possible to derive different MFG models of the discussed setting. 
Finally, we have shown that financial market models are a prototype candidate for MFG applications. Furthermore, this example motivates the discovery of novel scalings, we have conducted in this study.  \\[2 em] 
Extensions are the generalization of the microscopic differential game model. Such a new setting might include an infinite horizon optimization or stochastic state dynamics.  
Although all results of this study are formal, we believe that this work shows the great applicability of MFG theory to a large class of microscopic systems. We hope that this work clarifies the derivation of complex MFG models and furthermore enables the reader to apply MFG theory to a broad area of applications.

\section*{Acknowledgement}
 M. Herty and T. Trimborn would like to thank the German Research Foundation DFG for the kind support within the Cluster of Excellence Internet of Production (Project-ID: 390621612).
 Funded by the Excellence Initiative of the German federal and state governments.
M. Herty and T.Trimborn acknowlege the support by the ERS Prep Fund - Simulation and Data Science. 
T. Trimborn gratefully acknowledges the support by the Hans-B\"ockler-Stiftung and would like to thank the RWTH Start Up Grant for the kind support. 
\clearpage

		\bibliographystyle{abbrv}	
	\bibliography{literaturmean.bib}

\begin{thebibliography}{10}

\bibitem{achdou2010mean}
Y.~Achdou and I.~Capuzzo-Dolcetta.
\newblock Mean field games: Numerical methods.
\newblock {\em SIAM Journal on Numerical Analysis}, 48(3):1136--1162, 2010.

\bibitem{aumann1964markets}
R.~J. Aumann.
\newblock Markets with a continuum of traders.
\newblock {\em Econometrica: Journal of the Econometric Society}, pages 39--50,
  1964.

\bibitem{axelrod1997complexity}
R.~M. Axelrod.
\newblock {\em The complexity of cooperation: Agent-based models of competition
  and collaboration}.
\newblock Princeton University Press, 1997.

\bibitem{bellman2013dynamic}
R.~Bellman.
\newblock {\em Dynamic programming}.
\newblock Courier Corporation, 2013.

\bibitem{bellomo2017active}
N.~Bellomo, P.~Degond, and E.~Tadmor.
\newblock {\em Active Particles, Volume 1: Advances in Theory, Models, and
  Applications}.
\newblock Birkh{\"a}user, 2017.

\bibitem{bellomo2019active}
N.~Bellomo, P.~Degond, and E.~Tadmor.
\newblock {\em Active Particles, Volume 2: Advances in Theory, Models, and
  Applications}.
\newblock Birkh{\"a}user, 2019.

\bibitem{bensoussan2014control}
A.~Bensoussan and J.~Frehse.
\newblock Control and nash games with mean field effect.
\newblock In {\em Partial differential equations: theory, control and
  approximation}, pages 1--39. Springer, 2014.

\bibitem{bensoussan2013mean}
A.~Bensoussan, J.~Frehse, P.~Yam, et~al.
\newblock {\em Mean field games and mean field type control theory}, volume
  101.
\newblock Springer, 2013.

\bibitem{bertsekas1995dynamic}
D.~P. Bertsekas, D.~P. Bertsekas, D.~P. Bertsekas, and D.~P. Bertsekas.
\newblock {\em Dynamic programming and optimal control}, volume~1.
\newblock Athena scientific Belmont, MA, 1995.

\bibitem{bressan2011noncooperative}
A.~Bressan.
\newblock Noncooperative differential games.
\newblock {\em Milan Journal of Mathematics}, 79(2):357--427, 2011.

\bibitem{cardaliaguet2010notes}
P.~Cardaliaguet.
\newblock Notes on mean field games.
\newblock Technical report, Technical report, 2010.

\bibitem{cardaliaguet2015master}
P.~Cardaliaguet, F.~Delarue, J.-M. Lasry, and P.-L. Lions.
\newblock {\em The master equation and the convergence problem in mean field
  games}.
\newblock to appear in; Annals of Mathematics Studies, Princeton University
  Press, 2019.

\bibitem{cardaliaguet2013long}
P.~Cardaliaguet, J.-M. Lasry, P.-L. Lions, and A.~Porretta.
\newblock Long time average of mean field games with a nonlocal coupling.
\newblock {\em SIAM Journal on Control and Optimization}, 51(5):3558--3591,
  2013.

\bibitem{cardaliaguet2012long}
P.~Cardaliaguet, J.-M. Lasry, P.-L. Lions, A.~Porretta, et~al.
\newblock Long time average of mean field games.
\newblock {\em NHM}, 7(2):279--301, 2012.

\bibitem{carmona2013probabilistic}
R.~Carmona and F.~Delarue.
\newblock Probabilistic analysis of mean-field games.
\newblock {\em SIAM Journal on Control and Optimization}, 51(4):2705--2734,
  2013.

\bibitem{carmona2018probabilistic}
R.~Carmona and F.~Delarue.
\newblock {\em Probabilistic Theory of Mean Field Games with Applications
  I-II}.
\newblock Springer, 2018.

\bibitem{degond2014meanfield}
P.~Degond, M.~Herty, and J.-g. Liu.
\newblock Meanfield games and model predictive control.
\newblock {\em Communications in Mathematical Sciences}, 15, 12 2014.

\bibitem{gilbert2008agent}
N.~Gilbert.
\newblock {\em Agent-based models}.
\newblock Number 153. Sage, 2008.

\bibitem{gueant2011mean}
O.~Gu{\'e}ant, J.-M. Lasry, and P.-L. Lions.
\newblock Mean field games and applications.
\newblock {\em Paris-Princeton lectures on mathematical finance 2010}, pages
  205--266, 2011.

\bibitem{huang2007invariance}
M.~Huang, P.~E. Caines, and R.~P. Malham{\'e}.
\newblock An invariance principle in large population stochastic dynamic games.
\newblock {\em Journal of Systems Science and Complexity}, 20(2):162--172,
  2007.

\bibitem{huang2006nash}
M.~Huang, R.~P. Malham{\'e}, and P.~E. Caines.
\newblock Nash certainty equivalence in large population stochastic dynamic
  games: Connections with the physics of interacting particle systems.
\newblock In {\em Decision and Control, 2006 45th IEEE Conference on}, pages
  4921--4926. IEEE, 2006.

\bibitem{huang2006large}
M.~Huang, R.~P. Malham{\'e}, P.~E. Caines, et~al.
\newblock Large population stochastic dynamic games: closed-loop mckean-vlasov
  systems and the nash certainty equivalence principle.
\newblock {\em Communications in Information \& Systems}, 6(3):221--252, 2006.

\bibitem{huang2018linear}
M.~Huang and M.~Zhou.
\newblock Linear quadratic mean field games--part i: The asymptotic solvability
  problem.
\newblock {\em arXiv preprint arXiv:1811.00522}, 2018.

\bibitem{huang2019linear}
M.~Huang and M.~Zhou.
\newblock Linear quadratic mean field games: Asymptotic solvability and
  relation to the fixed point approach.
\newblock {\em arXiv preprint arXiv:1903.08776}, 2019.

\bibitem{jovanovic1988anonymous}
B.~Jovanovic and R.~W. Rosenthal.
\newblock Anonymous sequential games.
\newblock {\em Journal of Mathematical Economics}, 17(1):77--87, 1988.

\bibitem{lachapelle2016efficiency}
A.~Lachapelle, J.-M. Lasry, C.-A. Lehalle, and P.-L. Lions.
\newblock Efficiency of the price formation process in presence of high
  frequency participants: a mean field game analysis.
\newblock {\em Mathematics and Financial Economics}, 10(3):223--262, 2016.

\bibitem{lasry2007mean}
J.-M. Lasry and P.-L. Lions.
\newblock Mean field games.
\newblock {\em Japanese journal of mathematics}, 2(1):229--260, 2007.

\bibitem{lasry2008application}
J.-M. Lasry, P.-L. Lions, and O.~Gu{\'e}ant.
\newblock Application of mean field games to growth theory.
\newblock hal-00348376, 2008.

\bibitem{levy2000microscopic}
H.~Levy, M.~Levy, and S.~Solomon.
\newblock {\em Microscopic simulation of financial markets: from investor
  behavior to market phenomena}.
\newblock Academic Press, 2000.

\bibitem{ma2019linear}
Y.~Ma and M.~Huang.
\newblock Linear quadratic mean field games with a major player: The
  multi-scale approach.
\newblock {\em arXiv preprint arXiv:1903.08780}, 2019.

\bibitem{nash1951non}
J.~Nash.
\newblock Non-cooperative games.
\newblock {\em Annals of mathematics}, pages 286--295, 1951.

\bibitem{simon1982models}
H.~A. Simon.
\newblock {\em Models of bounded rationality: Empirically grounded economic
  reason}, volume~3.
\newblock MIT press, 1982.

\bibitem{trimborn2017kinetic}
T.~Trimborn.
\newblock {\em Kinetic Modeling of Financial Market Models}.
\newblock PhD thesis, RWTH Aachen University, 2017.

\bibitem{Trimborn2019}
T.~Trimborn.
\newblock A macroscopic portfolio model: from rational agents to bounded
  rationality.
\newblock {\em Mathematics and Financial Economics}, 13, 2019.

\bibitem{trimborn2017portfolio}
T.~Trimborn, L.~Pareschi, and M.~Frank.
\newblock Portfolio optimization and model predictive control: A kinetic
  approach.
\newblock {\em Discrete and Continuous Dynamical Systems B}, 24, 2019.

\end{thebibliography}

\end{document}